\theoremstyle{plain}
\newtheorem{theorem}{Theorem}
\newtheorem{prop}[theorem]{Proposition}
\newtheorem{corollary}[theorem]{Corollary}
\newtheorem{lemma}[theorem]{Lemma}
\theoremstyle{definition}
\newtheorem{definition}[theorem]{Definition}
\newtheorem{remark}[theorem]{Remark}
\long\def\symbolfootnote[#1]#2{\begingroup
\def\thefootnote{\fnsymbol{footnote}}\footnote[#1]{#2}\endgroup}
\def\lra{\longrightarrow}
\def\A{\mathbf{A}}
\def\N{\mathrm{N}}
\def\1{\mf{1}}
\DeclareMathOperator{\id}{id}
\DeclareMathOperator{\Fitt}{Fitt}
\DeclareMathOperator{\Ind}{Ind}
\DeclareMathOperator{\ord}{ord}
 \DeclareMathOperator{\Cl}{Cl}
 \DeclareMathOperator{\red}{red}
\newcommand{\mat}[4]{\begin{pmatrix}{#1} & {#2} \\ {#3} & {#4}
\end{pmatrix}}
\newcommand{\mf}{\mathfrak }
\def\fp{\mathfrak{p}}
\def\fP{\mathfrak{P}}
\def\fO{\mathfrak{O}}
\def\Z{\mathbf{Z}}
\def\F{\mathbf{F}}
\def\Q{\mathbf{Q}}
\def\G{\mathbf{G}}
\def\R{\mathbf{R}}
\def\bdf{\begin{defn}}
\def\edf{\end{defn}}
\def\cO{\mathcal{O}}
\def\Gal{{\rm Gal}}
\def\ra{\rightarrow}
\def\ab{{\rm ab}}
\def\ram{\text{ram}}
\def\ab{\text{ab}}
\tikzset{
  symbol/.style={
    draw=none,
    every to/.append style={
      edge node={node [sloped, allow upside down, auto=false]{$#1$}}}
  }
}
\begin{document}
\baselineskip 15.8pt

\makeatletter
\renewcommand{\maketitle}{\bgroup\setlength{\parindent}{0pt}
\begin{center}
  \LARGE{\textbf{\@title}}
  
  \vspace{4mm}
  
  \large{\textsc{\@author}}
  
  \vspace{4mm}
\end{center}\egroup
}
\makeatother

\title{On The Equivariant Tamagawa Number Conjecture}
\author{Samit Dasgupta, Mahesh Kakde, and Jesse Silliman}

\maketitle

{\small 
\paragraph{Abstract.}
Let $F$ be a totally real field and $K$ a finite abelian CM extension of $F$.  Using class field theory, we show that our previous result giving a strong form of the Brumer--Stark conjecture implies the minus part of the equivariant Tamagawa number conjecture for the Tate motive associated to $K/F$.  We work integrally over $\Z$, in particular the prime 2 is not inverted.  This note can be viewed as our perspective on the recent work of Bullach, Burns, Daoud, and Seo.  Following their philosophy, we show that the functorial properties (i.e.~norm compatibilities) connecting the strong Brumer--Stark conjecture for varying number fields actually implies the minus part of the  
 equivariant Tamagawa number conjecture.
}

 \tableofcontents

 \section{Introduction}

Let $F$ be a totally real field and $K$ a finite abelian CM extension of $F$.  Write \[ G_K = \Gal(K/F).  \] In a previous paper with Jiuya Wang \cite{bsapet}, we proved refined versions of the Brumer--Stark conjecture for $K/F$.  In this note, we consider the minus part of the Equivariant Tamagawa Number Conjecture for the Tate motive attached to $K/F$, and call this ``the ETNC.''  In a recent preprint \cite{bbds}, Bullach, Burns, Daoud, and Seo develop the theory of \emph{scarcity of Euler systems} and as a consequence prove that our results on the strong Brumer--Stark conjecture proved in \cite{dk} actually imply the ETNC over $\Z[1/2]$.  The purpose of this note is to give our perspective on this implication yielding a concise proof of the ETNC.  We work integrally over $\Z$ rather than $\Z[1/2]$.  While we do not directly apply any results or arguments of \cite{bbds}, our method is directly inspired by theirs---by passing to a suitable inverse limit, the discrepancy between strong Brumer--Stark and the ETNC becomes a non-zerodivisor and hence can be inverted.

We begin by stating the ETNC, following Kurihara's formulation~\cite{kurihara}.
 First we must set some notation. 
 Let $c \in G_K$ denote the unique complex conjugation and define
 \[ \Z[G_K]_- = \Z[G_K]/(c+1). \]
 For any $G_K$-module $M$, we write \[M_- = M \otimes_{\Z[G_K]} \Z[G_K]_- = M/(c+1)M. \]
 Let $S_\infty$ denote the set of infinite primes of $F$, and let $S_{\ram}(K/F)$ denote the set of finite primes of $F$ ramified in $K$. 
Let \[ S = S(K) = S_\infty \cup S_{\ram}(K/F). \]
 Let $T$ denote a finite set of primes of $F$, disjoint from $S(K)$, satisfying the usual condition of Deligne--Ribet.
Let  $\nabla_{S}^{T}(K)$ denote the Ritter--Weiss $G_K$-module defined in \cite{dk}*{Appendix A}.  This is a transpose of the Selmer group denoted
$\mathcal{S}_{K, S}^T$ in \cite{kurihara} and $\mathcal{S}_{S,T}(\mathbb{G}_{m/K})$ in \cite{bks}.

 The module $\nabla_{S}^{T}(K)$ is the cokernel of a map
\begin{equation} \label{e:fdef}
 f_{S}^{T}(K) \colon V_{S'}^{T}(K) \longrightarrow B_{S'}(K), \end{equation}
where $S'$ is an auxiliary set of places containing $S$ satisfying certain conditions, and $V_{S'}^{T}(K)$ and $B_{S'}(K)$ are free $\Z[G_K]$-modules of rank $(\#S' - 1)$.  All of these objects are defined in \S\ref{s:rwm}.
For any CM subfield $E \subset K$ containing $F$, the $G_E$-module $V_{S'}^{T}(E)$ may be canonically identified with the space of $\Gal(K/E)$ co-invariants of $V_{S'}^{T}(K)$, and similarly for the  modules denoted $B$.  Therefore, a choice of bases for the domain and codomain of $f_K = f_{S}^{T}(K)$  naturally induces bases for the domain and codomain of $f_E  = f_{S(E)}^{T}(E)$.  Let $f_{K, -}$ denote the map obtained from $f_K$ after projecting to the minus side (i.e.\ tensoring with $\Z[G_K]_-$ over $\Z[G_K]$).
The formulation of the minus part of the ETNC that we prove in this paper is the following.

\begin{theorem}  \label{t:etnc}
Fix bases for the domain and codomain of $f_K = f_{S}^{T}(K)$, thereby inducing bases for the domain and codomain of $f_E = f_{S(E)}^{T}(E)$ for all CM fields $E\subset K$ containing $F$.  
There exists a unit $y_K \in (\Z[G_K]_-)^*$ such that
\begin{equation} \label{e:etnc}
 \det(f_{E,-}) = \red_{K/E}({y}_K) \cdot \Theta_{S(E),T}(E/F) \text{ in } \Z[G_E]_- \end{equation}
for all $E$, where $\red_{K/E}({y}_K)$ denotes the canonical reduction of $y_K$ in  $(\Z[G_E]_-)^*$.  
\end{theorem}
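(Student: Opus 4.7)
The plan is to apply the strong Brumer--Stark theorem at many CM abelian extensions $M/F$ above $K$, producing a family of units $y_M$, and then to check their compatibility under the natural reduction maps $\red_{M/N}$ by passing to an inverse limit in which the obstructing Euler factors become non-zerodivisors.

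First, for each finite abelian CM extension $M/F$ containing $K$, apply strong Brumer--Stark from \cite{bsapet,dk}, after enlarging $T$ if needed (but keeping it disjoint from every $S(M)$ under consideration) so that $\Theta_{S(M),T}(M/F)$ is a non-zerodivisor in $\Z[G_M]_-$. This produces a unit $y_M \in (\Z[G_M]_-)^*$ with
\[ \det(f_{M,-}) = y_M \cdot \Theta_{S(M),T}(M/F), \]
where $f_M = f_{S(M)}^T(M)$ uses bases induced from a single fixed choice at the top of the tower below. The goal is then to show $\red_{K/E}(y_K) = y_E$ for every CM subfield $E$ of $K$ containing $F$.

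Second, analyze the effect of $\red_{M/N} \colon \Z[G_M]_- \to \Z[G_N]_-$ for CM abelian $F \subset N \subset M$. Because $V_{S'}^T$ and $B_{S'}$ are compatible with passage to coinvariants, $\red_{M/N}(\det f_{M,-}) = \det f_{S(M)}^T(N)_-$. Functoriality of the Ritter--Weiss construction under change of $S$ should give
\[ \det f_{S(M)}^T(N)_- = \det f_{N,-} \cdot \prod_{\mathfrak{p} \in S(M)\setminus S(N)} (1-\sigma_{\mathfrak{p}}^{-1}), \]
paralleling the identity $\red_{M/N}(\Theta_{S(M),T}(M/F)) = \prod_{\mathfrak{p}\in S(M)\setminus S(N)}(1-\sigma_\mathfrak{p}^{-1}) \cdot \Theta_{S(N),T}(N/F)$. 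Combining these with strong Brumer--Stark at $M$ and at $N$, and using that $\Theta_{S(N),T}(N/F)$ is a non-zerodivisor, we obtain
\[ \prod_{\mathfrak{p} \in S(M) \setminus S(N)} (1-\sigma_\mathfrak{p}^{-1}) \cdot \bigl(\red_{M/N}(y_M) - y_N\bigr) = 0. \]
However, the Euler factors $1-\sigma_\mathfrak{p}^{-1}$ are typically zerodivisors in $\Z[G_N]_-$, so this does not yet force $\red_{M/N}(y_M) = y_N$ at finite level.

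Third, pass to an inverse limit over a cofinal tower $\{M_n\}$ of CM abelian extensions of $F$ containing $K$, arranged so that every finite-prime Frobenius $\sigma_\mathfrak{p}$ outside $S_\infty$ has infinite image in $\varprojlim G_{M_n}$. One may, for example, take $M_n$ to be the compositum of $K$ with $F(\mu_N)$ for all $N \leq n$, incorporating cyclotomic $\Z_\ell$-towers for every rational prime $\ell$ simultaneously, so that the argument stays integral over $\Z$ and does not invert~$2$. In the limit ring $\Lambda_- := \varprojlim_n \Z[G_{M_n}]_-$, each element $1-\sigma_\mathfrak{p}^{-1}$ becomes a non-zerodivisor, because in each character component of $\Lambda_-$ it corresponds to a non-constant element of an Iwasawa algebra. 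Applying the identity of Step~2 throughout the tower, the $y_{M_n}$ cohere into a unit $y_\infty \in \Lambda_-^*$, and $y_K := \red_{M_\infty/K}(y_\infty)$ satisfies $\red_{K/E}(y_K) = y_E$ for all $E$ by continuity; inserting this into Step~1 yields the theorem.

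The main obstacle is twofold: one must (a) verify the functoriality of the Ritter--Weiss presentation under change of $S$ as a clean identity (giving the factor $\prod (1-\sigma_\mathfrak{p}^{-1})$ with no ambiguous unit), and (b) construct a tower whose inverse-limit ring genuinely renders the relevant Euler factors non-zerodivisors integrally over $\Z$, without inverting the prime~$2$. Once these two ingredients are secured, the rest reduces to a diagram chase in the tower.
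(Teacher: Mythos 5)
Your high-level idea---pass to an inverse limit where the troublesome Euler factors become non-zerodivisors---is exactly the philosophy the paper credits to Bullach--Burns--Daoud--Seo and follows. But the implementation in Step~3 has an essential gap that is precisely where all the technical work of the paper lives.

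The gap: you assert that, \emph{applying the identity of Step~2 throughout the tower}, the units $y_{M_n}$ ``cohere into a unit $y_\infty \in \Lambda_-^*$.'' This does not follow. Step~2 only shows that
\[
P(M_{n+1}/M_n)\Bigl(\red_{M_{n+1}/M_n}(y_{M_{n+1}}) - y_{M_n}\Bigr) = 0,
\]
where $P(M_{n+1}/M_n)$ is a \emph{zerodivisor} in $\Z[G_{M_n}]_-$. Thus $\red_{M_{n+1}/M_n}(y_{M_{n+1}})$ and $y_{M_n}$ can genuinely differ, and the system $(y_{M_n})$ need not define any element of $\varprojlim_n \Z[G_{M_n}]_-$. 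Making the Euler factors non-zerodivisors in the limit ring $\Lambda_-$ is of no help unless you first produce an element of $\Lambda_-$ to which they apply; but producing it is the coherence you are trying to establish. This circularity is exactly what Proposition~\ref{p:main} (the ``scarcity'' result) resolves, and it is the non-trivial content of the whole argument. The paper's proof does \emph{not} build a cofinal tower and take a limit of group rings. Instead it works $p$-adically (Lemmas~\ref{l:pbyp} and~\ref{l:alter} reduce to $\Z_p$ and to the $(\Sigma,\Sigma')$-setting of the strong Brumer--Stark theorem), and then proves Proposition~\ref{p:main} by an induction on the Euler factors combined with a congruence argument: for each congruence level $p^k$ and each subfield $E$, Lemma~\ref{l:ep} produces a \emph{single} auxiliary extension $E'$ with $\Sigma(E')=\Sigma(E)$ in which the Frobenius at the relevant prime has $p$-power order $\ge p^{k'}$; using that the annihilator of $1-\sigma_v^{-1}$ in $\Z_p[G_{E'}]_-$ is the norm ideal, which pushes down to $(p^{k'-s})$ in $\Z_p[G_E]_-$, the obstruction disappears modulo $p^k$; and then Lemma~\ref{l:pin} passes to the limit over $k$, not over a tower of fields.

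Two further issues in your proposal. First, the extensions that can be used must satisfy $\Sigma(K')=\Sigma(K)$ (Proposition~\ref{p:extends} only extends the norm-compatible family to such $K'$), i.e.\ they must introduce no new ramification above $p$; your tower of compositums with $F(\mu_N)$, deliberately built from cyclotomic $\Z_\ell$-towers ``for every rational prime $\ell$ simultaneously,'' violates this at $\ell=p$. Second, you do not address passage from the $(\Sigma,\Sigma')$-form of the strong Brumer--Stark theorem (the only form proved in \cite{dk,bsapet}) to the $(S,T)$-form needed in Theorem~\ref{t:etnc}; this requires the local Ritter--Weiss computation of Lemma~\ref{l:tx} and Lemma~\ref{l:alter}. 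Once these points are addressed, the integrality over $\Z$ (including $p=2$) is handled by the prime-by-prime argument of Lemma~\ref{l:pbyp}, not by the mere fact that a field tower contains the $2$-cyclotomic direction.
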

Here $ \Theta_{S(E),T}(E/F)$ is the usual Stickelberger element (see \cite{bsapet}*{\S1}).

\subsection{Consequences} \label{s:icon}

Before sketching our proof of Theorem~\ref{t:etnc}, we connect our formulation to others and describe some important consequences.

\begin{itemize}
\item The statement of Theorem~\ref{t:etnc} is clearly equivalent to the projection to the minus side of Kurihara's conjecture given in \cite{kurihara}*{Conjecture 3.4}.
\item In \cite{kurihara}*{Proposition 3.5}, Kurihara proves that the minus part of his statement is equivalent to the minus part of the ``Leading Term Conjecture'' (LTC) stated in \cite{bks}.  The statement of the LTC and a discussion of its relationship to Theorem~\ref{t:etnc} is given in \S\ref{s:ltc}.

\item Burns, Kurihara, and Sano prove that their LTC is equivalent to the original form of the ETNC stated by Burns and Flach in \cite{bf}.
\end{itemize}

Theorem 1 is known to imply the following:
\begin{itemize}
\item The Brumer--Stark conjecture and the higher rank Brumer--Stark conjecture due to Rubin \cite{rubin}.
\item The integral Gross-Stark conjecture stated by Gross as a refinement to his $p$-adic Stark conjecture \cite{gross}, and 
the higher rank version stated by Popescu and called the Gross--Rubin--Stark conjecture \cite{pcmi}*{Conjecture 5.3.3}.
\item The minus part of the ``refined class number formula for $\G_m$'' due independently to Mazur and Rubin \cite{mr} and Sano \cite{sano}.
\end{itemize}

In fact, using the refined construction of Ritter-Weiss modules at $p=2$ given in our previous work \cite{bsapet}, one obtains stronger versions of all of these results, where $\Theta_{S,T}$ is replaced by
\[ \begin{cases}
\Theta_{S,T}/2^{n-2} & \text{ if } K/F \text{ is unramified at all finite places,} \\
\Theta_{S,T}/2^{n-1} & \text{ if } K/F \text{ is ramified at some finite place.}
\end{cases} \]
This is discussed in Appendix~\ref{s:con}.

Finally, while a proof of the implication does not seem to appear in the literature, the ETNC should imply the Iwasawa Main Conjecture at $p=2$ for totally real fields (see \cite{atsuta} for the statement and discussion of the Main Conjecture at $p=2$).

\subsection{Altering the smoothing and depletion sets}

In the remainder of the introduction, we sketch the proof of Theorem~\ref{t:etnc}.  As is well-known, one may prove the result prime by prime.  In Appendix~\ref{s:reduce} we prove the following elementary lemma.

\begin{lemma}  \label{l:pbyp}
 In order to prove Theorem~\ref{t:etnc}, it suffices to prove the statement tensored with $\Z_p$ (Theorem~\ref{t:etncp} below) for every prime $p$.
\end{lemma}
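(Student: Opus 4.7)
The plan is to use the local--global identity
\[ \Z[G_K]_- \;=\; \Q[G_K]_- \,\cap\, \prod_p \Z_p[G_K]_- \]
inside $\prod_p \Q_p[G_K]_-$, which holds because $\Z[G_K]_-$ is a finitely generated free $\Z$-module. The only other input is the analogous statement for units: a non-zerodivisor in $\Z[G_K]_-$ is a unit if and only if its image in each $\Z_p[G_K]_-$ is a unit (the inverse in $\Q[G_K]_-$ lies in each $\Z_p[G_K]_-$, hence in $\Z[G_K]_-$).

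Assume Theorem~\ref{t:etncp} holds for every prime $p$, producing units $y_K^{(p)} \in (\Z_p[G_K]_-)^*$ satisfying~\eqref{e:etnc} at every CM subfield $E \subset K$ containing $F$. Specialising to $E = K$ gives
\[ \det(f_{K,-}) \;=\; y_K^{(p)} \cdot \Theta_{S,T}(K/F) \quad \text{in } \Z_p[G_K]_-. \]
In the generic case where the Stickelberger element $\Theta := \Theta_{S,T}(K/F)$ is a non-zerodivisor in $\Q[G_K]_-$---which occurs when the minus-part $L$-values $L_{S,T}(\chi^{-1},0)$ are all nonzero as $\chi$ ranges over odd characters of $G_K$---this equation rigidly determines $y_K^{(p)}$ as the image of the global rational element $y_K := \det(f_{K,-})/\Theta \in \Q[G_K]_-$. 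Since $y_K^{(p)} \in \Z_p[G_K]_-$ for every $p$, the intersection identity places $y_K \in \Z[G_K]_-$, and the unit property $y_K^{(p)} \in (\Z_p[G_K]_-)^*$ upgrades this to $y_K \in (\Z[G_K]_-)^*$. The equations~\eqref{e:etnc} at each subfield $E$ then hold in every $\Z_p[G_E]_-$ by hypothesis, hence integrally in $\Z[G_E]_-$ by a final application of the intersection formula.

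The main obstacle is the degenerate case where $\Theta$ vanishes at some odd character $\chi$ of $G_K$---this happens, for example, whenever $\chi(\frob_v) = 1$ at some $v \in S \setminus S_\infty$ at which $\chi$ is unramified. In that case the equation at $E = K$ pins down $y_K^{(p)}$ only modulo $\Ann_{\Z_p[G_K]_-}(\Theta)$, so a priori the $y_K^{(p)}$ could disagree across $p$ within this annihilator. I would handle this by descending to the quotient $\Z[G_K]_-/\Ann(\Theta)$, where multiplication by $\Theta$ is injective and the rigidity argument of the previous paragraph applies verbatim to produce a common global class $\overline{y}_K$. Lifting $\overline{y}_K$ to a global unit then amounts to a coset choice inside $\Ann(\Theta)$, a finitely generated free $\Z$-submodule; the compatibility~\eqref{e:etnc} at all intermediate subfields $E$ cuts down the admissible cosets enough that the intersection formula inside $\Ann(\Theta)\otimes\Q$ produces the desired lift. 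The substantive content of the lemma is exactly this coset-lifting step; the rest is bookkeeping with the local--global identity.
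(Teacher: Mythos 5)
Your proposal correctly identifies the crux of the lemma and correctly notices that $\Theta_{S,T}(K/F)$ is, in general, a zerodivisor in $\Q[G_K]_-$ — indeed, $\chi(\Theta_{S,T}(K/F))=L_{S,T}(\chi^{-1},0)$ vanishes for every odd $\chi$ that is trivial on some decomposition group at a finite prime of $S$. But that is precisely where your argument stops: you acknowledge that the equations at intermediate fields $E$ must be used to "cut down the admissible cosets," and you even flag that "the substantive content of the lemma is exactly this coset-lifting step" — and then you do not carry it out. As written, this is a genuine gap, not bookkeeping.

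The paper's proof resolves the issue directly and avoids any coset-lifting. One sets $y_K:=(y_K^{(p)})_p\in(\hat{\Z}[G_K]_-)^*$ and observes that $y_K$ satisfies \eqref{e:etnc} $p$-adically for all $E$. To see that $y_K$ is rational, one works one odd character $\chi$ of $G_K$ at a time: take $E$ to be the fixed field of $\ker(\chi)$ and apply $\chi$ to \eqref{e:etnc} for that particular $E$, obtaining $\chi(\det(f_{S(E),-}^T))=\chi(y_K)\,L_{S(E),T}(\chi^{-1},0)$. The decisive point — the one your proposal never exploits — is that for this choice of $E$, every place $v\in S(E)$ has $\chi(v)\neq 1$, so $L_{S(E),T}(\chi^{-1},0)\neq 0$; consequently $\chi(y_K)$ is a quotient of nonzero algebraic numbers, hence an algebraic number independent of $p$. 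Since this holds for every odd $\chi$, the element $y_K$ lies in $\overline{\Q}[G_K]_-$, and $\overline{\Q}[G_K]_-\cap(\hat{\Z}[G_K]_-)^*=(\Z[G_K]_-)^*$ finishes the proof. In particular there is no residual "coset choice inside $\Ann(\Theta)$": once all the equations \eqref{e:etnc} are imposed, $y_K$ is uniquely determined by its character values, and the selection of $E=E_\chi$ is exactly what makes each character value visible. If you want to repair your sketch along your own lines, you would at minimum need to prove that $\bigcap_E \red_{K/E}^{-1}\bigl(\Ann_{\Z_p[G_E]_-}(\Theta_{S(E),T}(E/F))\bigr)=0$ in $\Z_p[G_K]_-$, which is essentially the same character computation in disguise.
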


\begin{theorem}  \label{t:etncp} Let $p$ be a prime.
Fix notation as in Theorem~\ref{t:etnc}. 
There exists a unit $y_K \in (\Z_p[G_K]_-)^*$ such that
\[
 \det(f_{E,-}) = \red_{K/E}({y}_K) \cdot \Theta_{S(E),T}(E/F) \text{ in } \Z_p[G_E]_- \]
for all $E$.
\end{theorem}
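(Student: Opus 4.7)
The strategy is to lift everything to the cyclotomic Iwasawa tower above $K$, apply strong Brumer--Stark at each finite layer, and exploit the fact that in the Iwasawa algebra the Stickelberger element becomes a non-zerodivisor. The underlying difficulty at finite level is that the unit relating $\det(f_{K,-})$ and $\Theta_{S,T}(K/F)$ is only well-defined modulo $\Ann(\Theta_{S,T}(K/F))$, and this ambiguity is precisely what obstructs the norm-compatibility in \eqref{e:etnc}; passing to the limit rigidifies the choice.

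Concretely, let $F_\infty/F$ be the cyclotomic $\Z_p$-extension, put $K_\infty := K F_\infty$, $\mathcal{G} := \Gal(K_\infty/F)$, $\Lambda := \Z_p[[\mathcal{G}]]$, $\Lambda_- := \Lambda/(1+c)$, and let $K = K_0 \subset K_1 \subset \cdots$ be the finite layers of $K_\infty/K$. After enlarging the auxiliary set $S'$ in the definition of $f_K$ to contain every prime of $F$ ramified in $K_\infty/F$, the Ritter--Weiss presentations $f_{K_n,-}\colon V_n \to B_n$ are compatible under co-invariance along the tower. Strong Brumer--Stark \cite{dk,bsapet} at each $K_n$ produces a unit $u_n \in (\Z_p[G_{K_n}]_-)^*$ satisfying
\[
\det(f_{K_n,-}) = u_n \cdot \Theta_n \quad \text{in } \Z_p[G_{K_n}]_-,
\]
where $\Theta_n$ denotes the relevant Stickelberger element of $K_n/F$. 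Taking the inverse limit yields a $\Lambda_-$-linear map $f_{\infty,-}\colon V_\infty \to B_\infty$ of free modules of equal rank with cokernel $\nabla_{\infty,-}$, together with $\Theta_\infty := \varprojlim \Theta_n \in \Lambda_-$. The key point is that $\Theta_\infty$ is a non-zerodivisor in $\Lambda_-$: after decomposing $\Lambda_- \otimes_{\Z_p} \Q_p$ over the odd characters $\chi$ of the torsion subgroup of $\mathcal{G}$, the image of $\Theta_\infty$ in each factor is a Deligne--Ribet $p$-adic $L$-function (multiplied by nonzero Euler and smoothing factors), which is a nonzero power series. Combined with the finite-level identities this gives $\det(f_{\infty,-}) = u_\infty \cdot \Theta_\infty$ in $\Lambda_-$ for a uniquely determined $u_\infty \in \Lambda_-$. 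Each reduction $\red_n(u_\infty) = u_n$ is a unit, and since every maximal ideal of $\Lambda_-$ contains the augmentation kernel $\ker(\Lambda_- \to \Z_p[G_{K_n}]_-)$ for every $n$, it follows that $u_\infty$ itself is a unit in $\Lambda_-$.

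Setting $y_K := \red_{K_\infty/K}(u_\infty) \in (\Z_p[G_K]_-)^*$ and applying $\red_{K_\infty/E}$ to the Iwasawa-level identity yields the desired equation at any CM subfield $E \subset K$ containing $F$, once one reconciles the Stickelberger element for $S(K_\infty)$ with the minimal one for $S(E)$ via the standard relationship between Ritter--Weiss modules for different depletion sets. I expect the technical heart of the argument to be the transfer of strong Brumer--Stark to the Iwasawa limit, i.e., proving $\Fitt_{\Lambda_-}(\nabla_{\infty,-}) = (\Theta_\infty)$ in $\Lambda_-$ from the finite-level identities. This is exactly where the non-zerodivisor property of $\Theta_\infty$ is indispensable: it collapses the unit ambiguity present at each $K_n$ to a single unit in the Iwasawa algebra, from which the norm-compatible $y_K$ is extracted by descent. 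A secondary concern will be the careful bookkeeping of Euler factors coming from primes in $S(K_\infty) \setminus S(E)$, particularly those above $p$, when descending to individual CM subfields.
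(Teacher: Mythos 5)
Your guiding philosophy---pass to a limit where the Stickelberger element becomes a non-zerodivisor, which rigidifies the ambient unit and makes the finite-level descent determinate---is exactly the philosophy the paper articulates in its introduction. However, the specific mechanism you propose, the cyclotomic Iwasawa tower $K_\infty/K$, does not work, and the obstruction you flag as a ``secondary concern'' is in fact the central difficulty that the entire argument of the paper is designed to overcome.

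Here is the problem concretely. Along the cyclotomic tower the depletion set stabilizes, for $n\ge 1$, at $\Sigma(K_n)=S_\infty\cup\{v\mid p\}$, so the Iwasawa-level element $\Theta_\infty$ lives over this enlarged set. Projecting your identity $\det(f_{\infty,-})=u_\infty\Theta_\infty$ down to any finite CM field $E\subset K$ therefore gives the ETNC for $E$ with depletion set $S_\infty\cup\{v\mid p\}$, not with $\Sigma(E)$. The discrepancy is the Euler factor $\prod_{v\mid p,\,v\notin\Sigma(E)}(1-\sigma_v^{-1})$, and these $\sigma_v$ have finite order in $G_E$, so this factor is a zerodivisor in $\Z_p[G_E]_-$; it cannot be cancelled. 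Indeed, if some $v\mid p$ splits completely in $K$, then $\red_{K_\infty/K}(\Theta_\infty)=0$ and your identity at level $K$ becomes $0=0$, yielding no information at all about the $\chi$-components where $\chi(\sigma_v)=1$. The paper's Proposition~\ref{p:euler} shows precisely that these identities with Euler factors dangling are what one gets for free---that is the definition of a norm compatible family---and the substance of \S\ref{s:trivial} (Lemma~\ref{l:ep} and the inductive argument in Proposition~\ref{p:main}) is devoted to removing the Euler factor. The crucial point is that Lemma~\ref{l:ep} builds, via \cite{nsw}*{Theorem 9.2.7}, auxiliary CM extensions $E'/E$ in which the primes of $\Sigma(E)$ split completely (so $\Sigma(E')=\Sigma(E)$ and the idempotents $e_E$, $e_{E'}$ have the same $p$-denominator) while the Frobenius at one chosen prime $v$ acquires prescribed large $p$-power order. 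The cyclotomic $\Z_p$-extension does the opposite: it ramifies at every prime above $p$, enlarging $\Sigma$ and shrinking $e$, which is precisely why it cannot serve here.

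There is also the auxiliary set $S'$: one needs $\Cl_{S'}^{\Sigma'}(K_n)=1$ for all $n$, and no fixed finite $S'$ achieves this once Iwasawa class groups start growing; so ``$f_{\infty,-}$'' with a fixed $S'$ does not exist. The paper side-steps this by never forming an inverse limit of Ritter--Weiss modules, working purely at finite levels with norm compatibility, and by Lemma~\ref{l:aux-change} for changing $S'$. Finally, note that establishing $\Fitt_{\Lambda_-}(\nabla_{\infty,-})=(\Theta_\infty)$, which your argument requires, is essentially the Iwasawa Main Conjecture over $\Lambda_-$ (including at $p=2$); this is much more than a transfer of the finite-level identities and is explicitly not assumed by the paper, which instead deduces everything from the finite-level Fitting identities of Theorem~\ref{t:fitt} via the scarcity argument.
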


Note that we are working with all primes $p$, including $p=2$.  In order to apply the results of \cite{bsapet}, we must alter the smoothing and depletion sets to mirror the setting there. Define
\begin{equation} \label{e:sigmadef}
 \Sigma(K) = S_\infty \cup \{v \in S_{\ram}(K/F)\colon v \mid p\}, \quad \Sigma'(K) = T \cup  \{v \in S_{\ram}(K/F)\colon v \nmid p\}. 
 \end{equation}
Write $\Sigma  =\Sigma(K), \Sigma' = \Sigma'(K)$.
As above, there are $\Z[G_K]$-modules $V_{S'}^{\Sigma'}(K)$ and $B_{S'}(K)$ and a canonical map
\[ f_{\Sigma}^{\Sigma'}(K) \colon V_{S'}^{\Sigma'}(K) \longrightarrow B_{S'}(K). \]
The Ritter--Weiss module $\nabla_{\Sigma}^{\Sigma'}(K)$ is defined to be the cokernel of the map $f_{\Sigma}^{\Sigma'}(K).$

If we note by a subscript $p$ the tensor product with $\Z_p$, then we prove in \cite{dk} that the $\Z_p[G_K]$-modules $V_{S'}^{\Sigma'}(K)_p$ and $B_{S'}(K)_p$ are both free of rank 
$(\#S' - 1)$.  Therefore $\nabla_{\Sigma}^{\Sigma'}(K)_{p}$ is quadratically presented, and we  proved  (in \cite{dk} for $p$ odd and in \cite{bsapet} for $p=2$):
\begin{theorem} \label{t:fitt}
We have \begin{equation} \label{e:fitt}
\Fitt_{\Z_p[G_K]_-}(\nabla_{\Sigma}^{\Sigma'}(K)_{p,-}) = (\Theta_{\Sigma, \Sigma'}(K/F)). \end{equation}
\end{theorem}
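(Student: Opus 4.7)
The plan is to follow the Ribet-style approach of the authors' prior works \cite{dk, bsapet}, combining structural properties of the Ritter--Weiss module with an analytic class number input.

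First I would establish the inclusion $\Theta_{\Sigma, \Sigma'}(K/F) \in \Fitt_{\Z_p[G_K]_-}(\nabla_{\Sigma}^{\Sigma'}(K)_{p,-})$. The heart of the argument is Ribet's congruence method, executed with group-ring-valued Hilbert modular forms on $F$. One constructs an Eisenstein series whose constant terms encode $\Theta_{\Sigma,\Sigma'}$ (suitably normalized so that one cusp contributes a unit and another contributes the Stickelberger element), and then produces a cuspidal Hilbert modular form congruent to this Eisenstein series modulo $\Theta_{\Sigma,\Sigma'}$. The Galois representation attached to the cuspform is residually reducible; lifting the extension class and identifying it via class field theory with an ideal class in an auxiliary CM extension yields elements satisfying the desired Stickelberger-type relation. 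Translating these relations into the explicit quadratic presentation of $\nabla_{\Sigma}^{\Sigma'}(K)_{p,-}$ places $\Theta_{\Sigma,\Sigma'}$ in the Fitting ideal.

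Next I would prove the reverse inclusion by a cardinality argument. Since $\nabla_{\Sigma}^{\Sigma'}(K)_{p,-}$ is quadratically presented, its Fitting ideal is principal, generated by some $\theta'$. The equivariant analytic class number formula, together with the computation of the Euler characteristic of the global units and class group after $(\Sigma,\Sigma')$-modification, shows that $\theta'$ and $\Theta_{\Sigma,\Sigma'}$ generate ideals of the same index in $\Z_p[G_K]_-$. Combined with the forward containment, this forces equality.

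The main obstacle is executing the argument integrally at $p=2$. Over $\Z_2[G_K]$ the minus projection is not split — $\Z_2[G_K]_-$ is not a direct summand of $\Z_2[G_K]$ — so the standard idempotent decomposition used for odd $p$ fails, and the naive Ritter--Weiss module need not even be quadratically presented after minus projection. The remedy is the refined construction at $p=2$ carried out in \cite{bsapet}, which modifies the local conditions defining $\nabla$ so that the quadratic presentation and the Fitting-ideal formalism survive. A second, analytic difficulty is controlling the exact archimedean contribution in the class number formula: only with the refined Ritter--Weiss construction do the archimedean powers of $2$ cancel correctly, upgrading the a priori divisibility into the claimed equality \eqref{e:fitt}.
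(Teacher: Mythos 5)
Theorem~\ref{t:fitt} is not proved in this paper.  The authors explicitly defer to prior work, writing ``we proved (in \cite{dk} for $p$ odd and in \cite{bsapet} for $p=2$)'' immediately before the statement; the entire ``proof'' here is a citation.  Your proposal is therefore not really a proof to be compared against the paper's argument, but an attempt to compress the two cited papers into a couple of paragraphs.  At that level of granularity your outline is broadly faithful to the cited works: the forward inclusion does rest on group-ring-valued Hilbert Eisenstein series, congruent cusp forms, lattices in residually reducible Galois representations, and the identification of extension classes with class-group elements, and the $p=2$ refinements of \cite{bsapet} are indeed what rescue the quadratic presentation over $\Z_2[G_K]_-$.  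But none of the actual content of that hundred-page argument is reproduced, so this cannot be accepted as a proof; it is at best an accurate table of contents.

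One specific point in your sketch that would not survive scrutiny is the ``cardinality argument'' for the reverse containment as you phrase it.  The element $\Theta_{\Sigma,\Sigma'}(K/F)$ is in general a zerodivisor in $\Z_p[G_K]_-$ (this is precisely why the present paper introduces the idempotent $e_E$ of \eqref{e:eedef} and the quotient ring $R_E$), so the principal ideal it generates does not have finite index and one cannot simply match cardinalities of $\Z_p[G_K]_-/(\theta')$ and $\Z_p[G_K]_-/(\Theta_{\Sigma,\Sigma'})$.  The actual deduction in \cite{dk} is more delicate than ``both sides have the same index,'' and requires handling the locus where the trivial zeros live.  If you wanted to present a genuine proof of Theorem~\ref{t:fitt}, you would need to supply the machinery of \cite{dk} and \cite{bsapet} in full; within the scope of the current paper, the correct move is exactly what the authors do, which is to cite those results as black boxes.
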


  The left side of (\ref{e:fitt}) is by definition the image in $\Z_p[G_K]_-$ of the ideal generated by the determinant of $f_{\Sigma}^{\Sigma'}(K)_p$ with respect to any choice of bases for its domain and codomain. As above, fixing such bases induces canonical bases for the domain and codomain of 
\[ f_{\Sigma(E)}^{\Sigma'(K)}(E)_p \colon V_{S'}^{\Sigma'(K)}(E)_p \longrightarrow B_{S'}(E)_p \]
for any CM subfield $E \subset K$ containing $F$.  Note that we have altered the depletion set $\Sigma$, but have intentionally left the smoothing set $\Sigma'$
and the auxiliary set $S'$ the same to ensure that the co-invariance property mentioned above holds.  Our altered version of the $p$-part of the ETNC is the following.

\begin{theorem}  \label{t:etnca}
Fix bases for the domain and codomain of $f_K = f_{\Sigma}^{\Sigma'}(K)_p$, thereby inducing bases for the domain and codomain of $f_E = f_{\Sigma(E)}^{\Sigma'(K)}(E)_p$ for all CM fields $E\subset K$ containing $F$.  
There exists a unit $y_K \in (\Z_p[G_K]_-)^*$ such that
\begin{equation}
\label{e:etnca}
 \det(f_{E,-}) = \red_{K/E}({y}_K) \cdot \Theta_{\Sigma(E),\Sigma'(K)}(E/F)  \text{ in } \Z_p[G_E]_- \end{equation}
for all $E$.
\end{theorem}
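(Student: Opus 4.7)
By Theorem~\ref{t:fitt} and the fact that $\nabla_{\Sigma}^{\Sigma'}(K)_p$ is quadratically presented by $f_K$, the minus-part Fitting ideal is principal and satisfies $(\det f_{K,-}) = (\Theta_{\Sigma,\Sigma'}(K/F))$ in $\Z_p[G_K]_-$. Accordingly we may write
\[ \det(f_{K,-}) \;=\; y_K \cdot \Theta_{\Sigma,\Sigma'}(K/F) \]
for some unit $y_K \in (\Z_p[G_K]_-)^*$. This $y_K$ is the candidate witness for Theorem~\ref{t:etnca}; the content is to arrange that its reductions simultaneously govern the analogous equations for all intermediate CM fields $E$.

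Fix an intermediate CM field $E$ and write $\pi = \red_{K/E} \colon \Z_p[G_K]_- \to \Z_p[G_E]_-$. Two parallel norm compatibilities come into play. The standard norm relation for Stickelberger elements yields
\[ \pi(\Theta_{\Sigma,\Sigma'}(K/F)) \;=\; \delta_E \cdot \Theta_{\Sigma(E),\Sigma'(K)}(E/F), \qquad \delta_E := \prod_{v \in \Sigma(K)\setminus\Sigma(E)}(1 - \sigma_v^{-1}), \]
where the Frobenius $\sigma_v \in G_E$ is well-defined because every $v$ in the product is unramified in $E/F$ by construction of the altered sets. In parallel, the co-invariance of $V_{S'}^{\Sigma'}$ and $B_{S'}$ recorded in the excerpt identifies $\pi(f_K)$ with $f_{\Sigma(K)}^{\Sigma'(K)}(E)_p$, which differs from $f_E = f_{\Sigma(E)}^{\Sigma'(K)}(E)_p$ only by enlargement of the depletion set from $\Sigma(E)$ to $\Sigma(K)$. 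A direct computation with the Ritter--Weiss construction of \cite{dk} then shows $\pi(\det f_{K,-}) = \delta_E \cdot \det(f_{E,-})$, with the \emph{same} factor $\delta_E$. Applying $\pi$ to the $K$-equation therefore gives
\[ \delta_E \cdot \det(f_{E,-}) \;=\; \pi(y_K) \cdot \delta_E \cdot \Theta_{\Sigma(E),\Sigma'(K)}(E/F). \]

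The main obstacle is cancelling $\delta_E \cdot \Theta_{\Sigma(E),\Sigma'(K)}(E/F)$, which need not be a non-zero-divisor in $\Z_p[G_E]_-$ (for instance, a prime $v \in \Sigma(K)\setminus\Sigma(E)$ that splits completely in $E/F$ forces $\delta_E = 0$). The resolution is to pass to a cyclotomic inverse limit. Let $F_\infty/F$ be the cyclotomic $\Z_p$-extension, disjoint from $K/F$, set $K_n = KF_n$, and write $\Lambda = \Z_p[[\Gal(K_\infty/F)]]_-$. The finite-level quadratic presentations and Theorem~\ref{t:fitt} assemble into an identity $\det(f_\infty) = y_\infty \cdot \Theta_\infty$ in $\Lambda$ with $y_\infty \in \Lambda^*$. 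In $\Lambda$ the element $\Theta_\infty$ is a non-zero-divisor by the non-vanishing of the minus-part Deligne--Ribet $p$-adic $L$-function, and the lifts $\delta_E^\infty$ of the Euler factors are non-zero-divisors because the cyclotomic character forces each $\sigma_v^{-1}$ to have infinite order. Cancellation, which failed in $\Z_p[G_E]_-$, is therefore valid in $\Lambda$. Replacing our initial $y_K$ by the image of $y_\infty$ in $(\Z_p[G_K]_-)^*$ (still a valid witness for the $K$-equation, since any two such units differ by an element annihilating $\Theta_{\Sigma,\Sigma'}(K/F)$) and projecting the Iwasawa identity down to $\Z_p[G_E]_-$ along $\Lambda \to \Z_p[G_E]_-$ yields $\det(f_{E,-}) = \red_{K/E}(y_K) \cdot \Theta_{\Sigma(E),\Sigma'(K)}(E/F)$ for every $E$ at once. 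The technical heart of the proof is thus the Iwasawa cancellation; the remainder is careful bookkeeping with the Ritter--Weiss construction.
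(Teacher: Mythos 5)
Your outline is correct up to and including the identification of the obstacle: starting from Theorem~\ref{t:fitt}, one gets $\det(f_{K,-}) = y_K\Theta_{\Sigma,\Sigma'}(K/F)$ for \emph{some} unit $y_K$, the functorial relations give $\pi(\det f_{K,-}) = \delta_E\det(f_{E,-})$ and $\pi(\Theta_K) = \delta_E\Theta_E$ with the same Euler factor, and the entire difficulty is that $\delta_E$ is a zero-divisor in $\Z_p[G_E]_-$. But the cyclotomic Iwasawa-theoretic resolution you propose has a genuine gap.

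The problem is that the cyclotomic $\Z_p$-extension changes the depletion set. Since every prime above $p$ eventually ramifies in $K_n/F$, one has $\Sigma(K_n) = S_\infty \cup \{v : v\mid p\}$ for $n\gg 0$, which is strictly larger than $\Sigma(K) = S_\infty \cup \{v\mid p : v\in S_\ram(K/F)\}$ unless $K/F$ already ramifies at every $v\mid p$. Consequently, the element $\Theta_\infty \in \Lambda$ you would assemble is the inverse limit of $\Theta_{\Sigma(K_n),\Sigma'}(K_n/F)$ for the \emph{large} depletion set; its image in $\Z_p[G_E]_-$ is $\Theta_{S_\infty\cup\{v\mid p\},\Sigma'}(E/F)$, which equals $\delta'_E \cdot \Theta_{\Sigma(E),\Sigma'}(E/F)$ for the Euler factor $\delta'_E = \prod_{v\mid p,\,v\notin\Sigma(E)}(1-\sigma_v^{-1})$ --- and this $\delta'_E$ is exactly the kind of element you were trying to avoid (it vanishes on any component where some such $v$ splits completely in $E$). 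The same factor appears on the determinant side. So even after cancelling $\delta'_E$ in $\Lambda_E$, where it is legitimately a non-zero-divisor, the resulting identity lives with data (depletion set $\Sigma(E)$) that does not sit inside the tower $E_n$: the quotient $\Theta_\infty/\delta'_\infty$ is not the limit of finite-level Stickelberger elements and need not project integrally. Projecting the uncancelled identity down instead lands you back at $\delta'_E\det(f_{E,-}) = \red(y_\infty)\,\delta'_E\,\Theta_E$ in $\Z_p[G_E]_-$, i.e.~the same point you started from. In short, the cancellation in $\Lambda$ does not commute with the specialization map $\Lambda \to \Z_p[G_E]_-$, because that map is not compatible with the depletion sets.

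The paper's proof takes a different route that is tailored precisely to avoid this. It recasts the discrepancies $e_E y_E$ as a \emph{norm-compatible family} in the sense of the definitions preceding Proposition~\ref{p:main}, verifies via Proposition~\ref{p:extends} that this family extends to any CM abelian $K'\supset K$ with $\Sigma(K')=\Sigma(K)$, and then invokes the ``scarcity'' statement Proposition~\ref{p:main}. The crux of that proposition is Lemma~\ref{l:ep}: for a given $E$ and a given $v\mid p$ with $v\notin\Sigma(E)$ and any $m$, one can construct a \emph{finite} CM abelian $E'\supset E$ (via the Grunwald--Wang style result \cite{nsw}*{Theorem 9.2.7}) for which $\Sigma(E')=\Sigma(E)$, the decomposition groups at primes in $\Sigma(E)$ are unchanged, and the Frobenius $\sigma_v$ has order divisible by $p^m$. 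Because $\Sigma(E')=\Sigma(E)$, there are no extra Euler factors to contend with; and because $\ord(\sigma_v)$ is a high power of $p$, the factor $1-\sigma_v^{-1}$ becomes invertible modulo $p^k$ up to controlled torsion. The argument then proceeds peel-off one prime at a time (the induction on the index $i$ in $P_i$), controls everything modulo $p^k$, and takes the honest ($k\to\infty$) inverse limit in $\Z_p[G_K]_-$ (Lemma~\ref{l:pin}). The ``inverse limit'' alluded to in the introduction is this one, not a cyclotomic tower, and the distinction matters exactly because it preserves $\Sigma$.
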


In Appendix~\ref{s:reduce}, we show:

\begin{lemma}  \label{l:alter} Theorem~\ref{t:etnca} implies Theorem~\ref{t:etncp}.
\end{lemma}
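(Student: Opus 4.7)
The plan is to reduce Theorem~\ref{t:etncp} to Theorem~\ref{t:etnca} by carrying out an explicit local comparison at the primes $v$ where the two setups differ, namely $v \in S_{\ram}(K/F)$ with $v \nmid p$. For such a $v$ and an intermediate CM field $F \subseteq E \subseteq K$, the two setups differ in one of two ways: (a) if $v$ is unramified in $E/F$, then $v \in \Sigma'(K)$ but $v \notin S(E) \cup T$, so $v$ is merely added to the smoothing set in passing from the $(S,T)$-setup to the $(\Sigma, \Sigma')$-setup; (b) if $v$ is ramified in $E/F$, then $v$ is depleted (not smoothed) in the $(S, T)$-setup but smoothed (not depleted) in the $(\Sigma, \Sigma')$-setup.

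First I would carry out the Stickelberger side. By a standard computation using the definition of Stickelberger elements, there is an explicit element $\Lambda_E \in \Z[G_E]$, built as a product of local Euler-type factors
\[ \lambda_{v,E} = \begin{cases} 1 - N v \cdot \sigma_v^{-1} & \text{if } v \text{ is unramified in } E/F, \\ \text{inertia-weighted local factor} & \text{if } v \text{ is ramified in } E/F, \end{cases} \]
ranging over $v \in S_{\ram}(K/F)$ with $v \nmid p$, such that
\[ \Theta_{\Sigma(E), \Sigma'(K)}(E/F) \;=\; \Lambda_E \cdot \Theta_{S(E), T}(E/F). \]
Crucially, the factors $\lambda_{v,E}$ are compatible with norm projections $\Z_p[G_K]_- \to \Z_p[G_E]_-$, as both sides arise from the same global Stickelberger element over $K$.

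Second I would establish the analogous identity on the Ritter--Weiss side. Using the construction of $V_{S'}^T(E)$ and $B_{S'}(E)$ from \cite{dk}*{Appendix A}, one writes an explicit change-of-basis relating the presentations $f_{\Sigma(E)}^{\Sigma'(K)}(E)_p$ and $f_{S(E)}^T(E)_p$. The key point is that at each $v \nmid p$, the local contribution to the Ritter--Weiss presentation is built from exactly the same decomposition and inertia data at $v$ as enters the Euler factor $\lambda_{v,E}$. On the minus part, this change-of-basis modifies the determinant by precisely $\Lambda_E$:
\[ \det(f_{\Sigma(E)}^{\Sigma'(K)}(E)_{p,-}) \;=\; \Lambda_E \cdot \det(f_{S(E)}^T(E)_{p,-}), \]
once the bases on one side are declared to be the image of the bases on the other side under the change-of-basis maps.

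Granting these two identities, the conclusion is immediate: starting from bases for $f_S^T(K)_p$ as in Theorem~\ref{t:etncp}, transport them via the $K$-level change-of-basis above to bases for $f_\Sigma^{\Sigma'}(K)_p$, and apply Theorem~\ref{t:etnca} to obtain a unit $y_K \in (\Z_p[G_K]_-)^*$ satisfying (\ref{e:etnca}) for all $E$. Dividing both sides of (\ref{e:etnca}) by $\Lambda_E$ then yields precisely (\ref{e:etnc}) of Theorem~\ref{t:etncp}, with the same unit $y_K$ (since $\Lambda_E$ is the reduction of the corresponding $\Lambda_K$). The main obstacle is the Ritter--Weiss bookkeeping in case (b), where the same prime $v$ is simultaneously removed from the smoothing set and added to the depletion set: one must verify that the net local modification to the presentation is exactly $\lambda_{v,E}$, which requires careful tracking of how ramified local units contribute to $V_{S'}$ and how the Tate-like cohomology at a newly depleted ramified prime accounts for the missing smoothing factor.
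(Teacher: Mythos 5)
Your high-level strategy — relate the two Stickelberger elements by an explicit product of local factors $\Lambda_E$, establish the parallel determinant identity on the Ritter--Weiss side, then divide — is exactly the plan of the paper's proof. However, your write-up is a sketch that identifies the necessary ingredients without supplying them, and the places where you are vague are precisely where the genuine work lies. There are three concrete gaps.

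First, you treat the comparison of $f_{\Sigma(E)}^{\Sigma'(K)}(E)_p$ and $f_{S(E)}^T(E)_p$ as a ``change of basis,'' implying the two maps have a common domain and codomain. They do not: the auxiliary set used to define the $(\Sigma,\Sigma')$ modules must be disjoint from $\Sigma'\supset J := S\setminus\Sigma = \{v \in S_{\ram}(K/F) : v\nmid p\}$, whereas the auxiliary set for the $(S,T)$ modules must contain $S\supset J$. The paper therefore compares $f_\Sigma^{\Sigma'}(K)$ using $S'$ with $f_S^T(K)$ using $S'\cup J$, via a short exact sequence $0\to V_{S'}^{\Sigma'}(K)\to V_{S'\cup J}^T(K)\to \prod_{v\in J}^{\sim} V_w/U_w^1\to 0$ and a commutative square with injective (not bijective) vertical maps of rank-$\#(S'\cup J)-1$ free modules. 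Your proposal never engages with the rank discrepancy or the auxiliary-set constraint.

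Second, you flag the core computation — ``the net local modification to the presentation is exactly $\lambda_{v,E}$, which requires careful tracking of how ramified local units contribute'' — but do not carry it out. In the paper this is Lemma~\ref{l:tx}: one must construct an explicit element $\tilde{x}\in V$ (the local Ritter--Weiss module) mapping to $e - \sigma^{-1}\N I \in\Delta G$, and then prove that $\Fitt_{\Z_p[G]}\big((V/(U,\tilde{x}))_p\big)$ is generated by $e - \sigma^{-1}\N I\cdot q$. This requires working out a two-generator, one-relation presentation of $(V/U)_p$ from the sequences (\ref{e:vdgu}), (\ref{e:vdgw}) and the Weil-group description of $V$, and is by far the hardest step; asserting that it ``modifies the determinant by precisely $\Lambda_E$'' is not a proof.

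Third, your claim that $\Lambda_E$ is simply the reduction of $\Lambda_K$ ``since both sides arise from the same global Stickelberger element'' is incorrect as stated. The local factors $x_v = e-\sigma^{-1}\N I$ and $y_v = e-\sigma^{-1}\N I\cdot q$ for $K/F$ do not reduce, under $\Z_p[G_K]\to\Z_p[G_E]$, to the corresponding factors for $E/F$ when $v$ is ramified in $K/F$ but unramified in $E/F$: they pick up an extra ramification-index scaling, and — more importantly — the component map $V_w\to\Z[G_w]$ in the $(S(E),T)$ presentation at such an unramified $v$ sends $\tilde{x}_v$ to $e_v = \#I_v$, not to $x_v$, so the right vertical arrow of the comparison diagram must be altered to maintain commutativity. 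The paper addresses this case explicitly; your proposal elides it, and without it your ``same unit $y_K$'' conclusion does not follow.
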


\subsection{Euler Systems}

In view of the discussion above, to prove Theorem~\ref{t:etnc} it suffices to prove that Theorem~\ref{t:fitt} implies Theorem~\ref{t:etnca}.
Theorem~\ref{t:fitt} (or more precisely, an easily deduced corollary that replaces the smoothing set $\Sigma'(E)$ by $\Sigma'(K)$) implies that for each $E \subset K$ we have a unit $y_E \in (\Z_p[G_E]_-)^*$ such that
\begin{equation} \label{e:fy}
 \det(f_{E,-}) = {y}_E \cdot \Theta_{\Sigma(E),\Sigma'(K)}(E/F). \end{equation}
 We need to show that the $y_E$ can be chosen to equal $\red_{K/E}({y}_K)$.  
As we explain below, functorial properties imply that the $y_E$ satisfy certain norm compatibilities, also known as {\em Euler system relations}.
To describe this, it is convenient to work for each $E$ with the quotient of $\Z_p[G_E]_-$ on which $\Theta_{\Sigma(E),\Sigma'(K)}(E/F)$ is a non-zerodivisor.  To this end, define $R_E = e_E \cdot \Z_p[G_E]_-$, where 
\begin{equation} \label{e:eedef}
 e_E = \prod_{v \in \Sigma(E)}(1 - \N G_{E,v}/\# G_{E,v}) \end{equation}
 is the idempotent corresponding to the factor on which $\Theta_{\Sigma(E),\Sigma'(K)}(E/F)$  is supported.  Here $G_{E,v} \subset G_E$ denotes the 
 decomposition group at $v$, and \[ \N G_{E,v} = \sum_{\sigma \in G_{E,v}} \sigma \] is its norm. 
 It suffices to prove (\ref{e:etnca}) after projecting to $R_E$, since it is easy to show that the projection to $(1 - e_E) \Z_p[G_E]_- $ of both sides of the equation vanish.
 In  \S\ref{s:norm}, we  prove:
 
\begin{prop} \label{p:euler} 
For every pair of CM fields $E', E$ such that $F \subset E \subset E' \subset K$, we have
 \begin{equation} \label{e:twoys}
 e_E(\red_{E'/E}({y}_{E'}) - y_E)  P(E'/E)  = 0  \text{ in } R_E, \end{equation}
 where
 \begin{equation} \label{e:pdef}
    P(E'/E) = \prod_{v \in \Sigma(E') - \Sigma(E)} (1 - \sigma_v^{-1}). 
    \end{equation}
     \end{prop}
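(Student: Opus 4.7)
The plan is to extract a norm compatibility for the units $y_E$ from the Fitting ideal formula of Theorem~\ref{t:fitt}, via three ingredients: (a) functoriality of the maps $f_E$ under passage to coinvariants, (b) a norm relation for Stickelberger elements under shrinking the depletion set, and (c) the non-zerodivisor property of the Stickelberger element on $R_E$.

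For (a), the identifications $V_{S'}^{\Sigma'(K)}(E) \cong V_{S'}^{\Sigma'(K)}(E')_{\Gal(E'/E)}$ and $B_{S'}(E) \cong B_{S'}(E')_{\Gal(E'/E)}$, together with the bases on $f_E$ and $f_{E'}$ induced from the chosen bases on $f_K$, force the matrix of $f_E$ to be the entry-wise image under $\red_{E'/E}$ of the matrix of $f_{E'}$. Taking determinants on the minus side yields $\det(f_{E,-}) = \red_{E'/E}(\det(f_{E',-}))$ in $\Z_p[G_E]_-$. For (b), a character-by-character calculation---using that every $v \in \Sigma(E') \setminus \Sigma(E)$ is $p$-adic, ramified in $E'$, and unramified in $E$, so Frobenius $\sigma_v$ is well-defined in $G_E$---shows that the ratio of the depleted $L$-values at each such $v$ contributes precisely the missing Euler factor $(1 - \chi(\sigma_v^{-1}))$, giving
\[ \red_{E'/E}\bigl(\Theta_{\Sigma(E'),\Sigma'(K)}(E'/F)\bigr) \;=\; P(E'/E)\cdot \Theta_{\Sigma(E),\Sigma'(K)}(E/F). \]

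Combining (a), (b), and the defining relations $\det(f_{E,-}) = y_E\,\Theta_{\Sigma(E),\Sigma'(K)}(E/F)$ and $\det(f_{E',-}) = y_{E'}\,\Theta_{\Sigma(E'),\Sigma'(K)}(E'/F)$ from (\ref{e:fy}) applied at both $E$ and $E'$, one obtains
\[ \bigl(y_E - \red_{E'/E}(y_{E'}) \cdot P(E'/E)\bigr) \cdot \Theta_{\Sigma(E),\Sigma'(K)}(E/F) = 0 \quad \text{in } \Z_p[G_E]_-. \]
For (c), the idempotent $e_E$ is designed precisely so that, in the minus part, any character $\chi$ with $\chi(e_E) \neq 0$ has $\chi(\Theta_{\Sigma(E),\Sigma'(K)}(E/F)) \neq 0$: the only way for the depleted $L$-value to vanish via an Euler factor at $v \in \Sigma(E)$ is via $\chi|_{G_{E,v}} = 1$, which is exactly what $e_E$ kills, while nonvanishing of minus-part $L$-values at $s=0$ and the Deligne--Ribet conditions on $T$ rule out the other possibilities. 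Consequently $\Theta_{\Sigma(E),\Sigma'(K)}(E/F)$ is a non-zerodivisor on $R_E$. Multiplying the displayed equation by $e_E$, cancelling the Stickelberger factor, and multiplying the resulting identity by $P(E'/E)$ yields (\ref{e:twoys}).

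The main obstacle is establishing the Stickelberger norm relation (b) integrally at $p = 2$, where the rings $\Z_2[G_E]_-$ are non-semisimple and elements like $(1 - \sigma_v^{-1})$ can be genuine zero-divisors; one must carefully track the treatment of ramification at primes in $\Sigma(E') \setminus \Sigma(E)$ and confirm the behavior of the smoothing factors coming from $\Sigma'(K)$. The manipulations inside $R_E$ in the last step are then routine once the non-zerodivisor property (c) is in place.
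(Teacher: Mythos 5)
Your step (a) contains a genuine error that propagates through the rest of the argument.  The maps $f_E = f_{\Sigma(E)}^{\Sigma'(K)}(E)_p$ and $f_{E'} = f_{\Sigma(E')}^{\Sigma'(K)}(E')_p$ are built using \emph{different} depletion sets $\Sigma(E) \subsetneq \Sigma(E')$.  For a place $v \in \Sigma(E') \setminus \Sigma(E)$, the local component of $f_{E'}$ at $v$ is the $\Delta G_w$-component (the $\Sigma$-branch of $\tilde f$), while the component of $f_E$ at $v$ is the $W_w$-component (the $S'\setminus\Sigma$-branch), and these agree only up to multiplication by $(1-\sigma_v^{-1})$.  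What taking $\Gal(E'/E)$-coinvariants (Lemma~\ref{l:coinv}) gives you is $f_{\Sigma(E')}^{\Sigma'(K)}(E)$, which is \emph{not} $f_E$.  You then must shrink the depletion set from $\Sigma(E')$ to $\Sigma(E)$ using Proposition~\ref{p:functoriality}(2), which contributes precisely $P(E'/E)$.  The correct determinant relation is therefore
\[
\red_{E'/E}\bigl(\det(f_{E',-})\bigr) \;=\; P(E'/E)\,\det(f_{E,-}),
\]
not $\red_{E'/E}(\det(f_{E',-})) = \det(f_{E,-})$.

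With your (a), you arrive at $e_E y_E = e_E\,\red_{E'/E}(y_{E'})\cdot P(E'/E)$ in $R_E$, which is false: $e_E y_E$ is a unit in $R_E$ while $P(E'/E)$ can be a zero-divisor (indeed $P(E'/E)=0$ in $R_E$ if some $v\in\Sigma(E')\setminus\Sigma(E)$ splits completely in $E$).  Your closing move of ``multiplying the resulting identity by $P(E'/E)$'' produces $e_E y_E P = e_E\,\red(y_{E'})P^2$, which is not the statement of the proposition either.  With the corrected (a) the computation is immediate: comparing $\red_{E'/E}(y_{E'})\,P\,\Theta_E = P\,y_E\,\Theta_E$ gives $(\red_{E'/E}(y_{E'})-y_E)\,P\,\Theta_E = 0$, and projecting to $R_E$ and cancelling $\Theta_E$ yields $e_E(\red_{E'/E}(y_{E'})-y_E)\,P(E'/E)=0$ directly.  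Your steps (b) and (c) are fine and match the paper; the only defect is the missing Euler factor in (a).
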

 The troublesome Euler factor $P(E'/E)$, which  can be a zerodivisor,
% (for example if there exists $v \mid p$ that is ramified in $K$ but splits completely in $E$ then  $P(K/E) = 0$), 
is all that prevents one from immediately deducing the ETNC. This leads to the following definition.

  \begin{definition}  Consider a set of elements $(z_E)_{E \subset K}$ with $z_E \in R_E$, indexed by the CM fields $E \subset K$ containing $F$.  We say this family  is {\em norm compatible} if for every pair of CM fields $E', E$ such that $F \subset E \subset E' \subset K$, we have
  \[ (e_E \red_{E'/E}(\tilde{z}_{E'}) - z_E )P(E'/E) = 0  \text{ in } R_E. \]
\end{definition}
 
   Here 
 $\tilde{z}_{E'}$ denotes any lift of $z_{E'}$ to $\Z_p[G_{E'}]_-$. While $\red_{E'/E}(\tilde{z}_{E'})$ depends on the lift chosen (even its image in $R_E$ depends on this choice), it is easy to check that $e_E \red_{E'/E}(\tilde{z}_{E'})P(E'/E)$ is independent of this choice, so the definition is well-formed.
 
With this definition, Proposition~\ref{p:euler} states that $(e_E y_E)_{E \subset K}$ is a norm compatible family.

\begin{definition} Suppose we are given a norm compatible set of elements $(z_E)_{E \subset K}$, with $z_E \in R_E$. Let $K'/F$ be a CM abelian extension such that $K' \supset K$.  We say that the family $(z_E)_{E \subset K}$ {\em can be extended to} $K'$ if there exists a  norm compatible family $(z_E')_{E \subset K'}$ with $z'_E \in R_E$ such that $z'_E = z_E$ for $E \subset K$.
\end{definition}

In the language of \cite{bbds}, the result below describes the ``scarcity of Euler systems."

\begin{prop} \label{p:main}  Suppose we are given a norm compatible set of elements $(z_E)_{E \subset K}$, with $z_E \in R_E$.  Suppose that for each CM abelian extension $K'/F$ such that \[ K' \supset K \quad \text{ and } \quad \Sigma(K') = \Sigma(K), \] the family $(z_E)_{E \subset K}$ can be extended to a  norm compatible family $(z'_E)_{E \subset K'}$ with $z'_E \in R_E$.  Then there exists a lift $\tilde{z}_K$ of $z_K$ to $\Z_p[G_K]_-$ such that
\[ e_E \red_{K/E}(\tilde{z}_K) = z_E  \text{ in } R_E \]
 for all $E \subset K$. 
\end{prop}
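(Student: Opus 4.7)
The plan is to pass to an inverse limit over the directed system $\cL$ of CM abelian extensions $K'/F$ satisfying $K' \supset K$ and $\Sigma(K') = \Sigma(K)$. The crucial feature of $\cL$ is that for any $K' \subset K''$ in $\cL$, the Euler factor $P(K''/K') = 1$ (an empty product), so norm compatibility within $\cL$ is honest compatibility, free of the troublesome Euler factors.

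First I would use compactness to obtain a single coherent extension of the family across $\cL$. For each $K' \in \cL$ the set $\cE(K')$ of norm compatible extensions of $(z_E)_{E \subset K}$ to $K'$ is a nonempty (by hypothesis) closed subset of the compact $\Z_p$-module $\prod_{K \subsetneq E \subset K'} R_E$. The finite intersection property for a cofiltered system of nonempty compact sets then guarantees $\lim_{K' \in \cL} \cE(K') \neq \emptyset$; fixing a coherent choice yields a norm compatible family $(z^\infty_E)$ indexed by all finite CM subfields of $L := \bigcup_{K' \in \cL} K'$.

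Next, because $\Sigma$ is constant on $\cL$, one has $\red_{K''/K'}(e_{K''}) = e_{K'}$, and the top elements $(z^\infty_{K'})_{K' \in \cL}$ form an honest coherent system in $\lim R_{K'}$. The reduction maps $\Z_p[G_{K''}]_- \to \Z_p[G_{K'}]_-$ are surjective with finitely generated kernels, so the associated $\lim^1$ vanishes and I can choose compatible lifts $(\tilde z_{K'})_{K' \in \cL}$ with $\tilde z_{K'} \in \Z_p[G_{K'}]_-$, $e_{K'} \tilde z_{K'} = z^\infty_{K'}$, and $\red_{K''/K'}(\tilde z_{K''}) = \tilde z_{K'}$. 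Define $\tilde z_K$ as the $K' = K$ component; then $e_K \tilde z_K = z_K$ by construction, and norm compatibility of the extended family (applied with $E' = K'$ for arbitrarily large $K' \in \cL$) yields the desired equation $e_E \red_{K/E}(\tilde z_K) = z_E$ modulo $P(K/E)$ for every $E \subsetneq K$.

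The main obstacle is promoting this congruence modulo $P(K/E)$ to exact equality. The obstruction $b_E := e_E \red_{K/E}(\tilde z_K) - z_E \in R_E$ is killed by $P(K/E)$, and the strategy is to decompose $R_E$ using the auxiliary idempotent $Q(K/E) := \prod_{v \in \Sigma(K) \setminus \Sigma(E)}(1 - \N G_{E,v}/\#G_{E,v})$: on the $e_E Q(K/E)$-piece each factor $(1 - \sigma_v^{-1})$ becomes a non-zerodivisor, forcing $Q(K/E) b_E = 0$, so $b_E$ already lies in $(1 - Q(K/E)) R_E$. This remaining piece can be absorbed by modifying $\tilde z_K$ by an element of $(1 - e_K) \Z_p[G_K]_-$, and the delicate point—the heart of the \emph{scarcity of Euler systems} philosophy of \cite{bbds}—is to verify that the freedom provided by the coherent family of lifts $(\tilde z_{K'})_{K' \in \cL}$ across the whole tower $\cL$ leaves enough room to cancel every obstruction $b_E$ for $E \subsetneq K$ simultaneously, rather than just one $E$ at a time.
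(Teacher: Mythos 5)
There is a genuine gap, and it sits exactly at the point you yourself flag as ``the delicate point.'' Your inverse-limit and compactness setup is a reasonable reformulation of the data, and it is true that within the subtower $\cL$ the Euler factors vanish, so the top elements glue honestly and a coherent lift $(\tilde z_{K'})_{K'\in\cL}$ can be chosen. But this only re-derives the starting congruence: for a fixed $E\subset K$ with $v\in\Sigma(K)\setminus\Sigma(E)$, every $K'\in\cL$ has $\Sigma(K')=\Sigma(K)$, so $P(K'/E)=P(K/E)$ and passing to arbitrarily large $K'$ in $\cL$ gives you no new information beyond the relation modulo $P(K/E)$ that you already had. The actual work remains entirely undone.

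The proposed absorption mechanism also does not go through as stated. Your element $Q(K/E)=\prod_{v\in\Sigma(K)\setminus\Sigma(E)}\bigl(1-\N G_{E,v}/\#G_{E,v}\bigr)$ fails to be an idempotent in $\Z_p[G_E]_-$ precisely when $p\mid\#G_{E,v}$ for some $v$, and this is exactly the regime in which $(1-\sigma_v^{-1})$ is a problematic zerodivisor; in the easy prime-to-$p$ case the decomposition is trivial and there is nothing to prove. So the proposed splitting $R_E=e_EQ\,R_E\oplus e_E(1-Q)R_E$ is unavailable over $\Z_p$ in the hard case, and the subsequent claim that $b_E$ can be absorbed by a modification of $\tilde z_K$ in $(1-e_K)\Z_p[G_K]_-$ is asserted but never constructed---you would have to solve all the obstructions $b_E$ for varying $E$ simultaneously, with no mechanism supplied.

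What is missing is the paper's actual engine: a concrete class-field-theoretic input (Lemma~\ref{l:ep}, built on \cite{nsw}*{Theorem 9.2.7}) producing, for each problematic $v_i\in\Sigma(K)\setminus\Sigma(E)$ and each $k$, an extension $E'/F$ with $\Sigma(E')=\Sigma(E)$, the same decomposition orders at $\Sigma(E)$, and Frobenius at $v_i$ of $p$-power order $\geq p^{k'}$. This forces the annihilator of $(1-\sigma_{v_i}^{-1})$ in $\Z_p[G_{E'}]_-$, after reduction to $\Z_p[G_E]_-$, to land in $(p^{k'-s})$, which is the estimate (\ref{e:pr}) that drives the induction and the approximation Lemma~\ref{l:pin}. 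The paper removes the primes $v_1,\dots,v_m$ of $\Sigma(K)$ from the Euler factor one at a time, each time using this large-Frobenius trick; your argument replaces this with an appeal to ``enough room'' in the tower that is not substantiated. In short: the formalism (limits, compactness, $\varprojlim^1$) is a serviceable alternative packaging of the reduction in Lemma~\ref{l:pin}, but without the arithmetic input of Lemma~\ref{l:ep} the core of the proposition is unproved.
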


Proposition~\ref{p:main} is proven in \S\ref{s:trivial}.  In Proposition~\ref{p:extends} of \S\ref{s:norm}, we will show that the norm compatible family $(e_E y_E)_{E \subset K}$  can indeed be lifted to an arbitrary CM abelian extension $K'/F$ containing $K$, and we also prove Proposition~\ref{p:euler}.  These statements together yield the desired result that Theorem~\ref{t:fitt} implies Theorem~\ref{t:etnca}.

\bigskip

We are of course indebted to Dominik Bullach, David Burns, Alexandre Daoud, and Soogil Seo for their paper \cite{bbds} that inspired this one.  We are also grateful to Masato Kurihara for providing a careful reading of an earlier draft and making suggestions that greatly improved the exposition (in particular, the correction of a subtle but important mistake).  We would also like to thank Matthew Honnor, Cristian Popescu, and Takamichi Sano for suggestions on an earlier draft.
The first named author is supported by a grant from the National Science Foundation (DMS-2200787). The second named author is supported by DST-SERB grant SB/SJF/2020-21/11, SERB MATRICS grant MTR/2020/000215, SERB SUPRA grant SPR/2019/ 000422, and DST FIRST program - 2021 [TPN - 700661]. 

\section{Definitions and Functorial Properties}

In this section we fix notation by recalling the definitions of the modules $V_{S'}^{\Sigma'}$, $B_{S'}$, and the map $f_{\Sigma}^{\Sigma'}$.  We then establish some (well-known) elementary properties.

\subsection{Ritter--Weiss modules} \label{s:rwm}

In this section we  let $K/F$ denote an arbitrary Galois extension of number fields, with $G= \Gal(K/F)$.  Let $\Sigma$ and $\Sigma'$ denote finite disjoint sets of places of $F$.  
We impose the condition that $K$ contains no nontrivial roots of unity congruent to 1 modulo all the primes above those in $\Sigma'$.
We assume in this paper that $\Sigma \supset S_\infty$ and that $\Sigma \cup \Sigma' \supset S_{\ram}(K/F)$, though there is a more general construction that holds in the absence of these conditions (see \cite{bsapet}).
We will recall the definition of the Ritter--Weiss module $\nabla_\Sigma^{\Sigma'}(K)$, referring the reader to \cite{dk}*{Appendix A} or of course the original paper of Ritter--Weiss \cite{rw} for details.

Let $S'$ be a set of places of $F$ satisfying the following properties:
\begin{itemize}
\item $S' \supset \Sigma$ and $S' \cap \Sigma' = \emptyset$.
\item $\Cl_{S'}^{\Sigma'}(K) = 1$.
\item $\cup_{w \in S'_K} G_w = G$, where $G_w \subset G$ is the decomposition group at $w$.
\end{itemize}
Here $S'_K$ denotes the set of places of $K$ above those in $S'$.

For each place $v$ of $F$, we fix a place $w$ of $K$ above $v$.  Ritter and Weiss define a $\Z[G_w]$-module $V_w$ sitting in an exact sequence:
\begin{equation} \label{e:vseq}
\begin{tikzcd}
 0 \ar[r] &   K_w^* \ar[r] &  V_w \ar[r] &  \Delta G_w \ar[r] &  0,
 \end{tikzcd}
\end{equation}
where  $\Delta G_w \subset \Z[G_w]$ denotes the augmentation ideal  (see \cite{rw} or \cite{dk}*{Equation (133)}).
For $w$ finite, they define a $\Z[G_w]$-module $W_w$ sitting in an exact sequence (see \cite{rw}*{\S3}):
\begin{equation} \label{e:wseq}  \begin{tikzcd}
  0  \arrow[r] &  \cO_w^* \arrow[r] &  V_w \arrow[r] &  W_w \ar[r] &  0. 
  \end{tikzcd}
 \end{equation}
We have
 \begin{equation} \label{e:rww}
  W_w \cong \{(r,s) \in \Delta G_w \times \Z[G_w/I_w] \colon \overline{r} = (1 - {\sigma}_w^{-1})s \}, \end{equation}
where $I_w$ denotes the inertia subgroup of $G_w$ and $\sigma_w \in G_w/I_w$ denotes an arithmetic Frobenius.  Note that if $w$ is unramified then projection on to the second coordinate yields an isomorphism $W_w \cong \Z[G_w]$.  

\begin{remark}
Here and throughout this paper, we follow Kurihara's conventions \cite{kurihara} using the choice of $1 - \sigma^{-1}$, rather than  $\sigma - 1$ as in our previous paper \cite{dk}, which followed Ritter--Weiss \cite{rw}.  These differ only by unit multiples, which had no effect in \cite{dk} because we were working with ideals; here, we are proving exact equalities, and Kurihara's conventions yield cleaner formulas.
\end{remark}

 We adopt the following notation of \cite{gr} and \cite{dk}: for a collection of $G_w$-modules $M_w$, we define 
 \[ \prod_{v}^\sim M_w := \prod_v \Ind_{G_w}^{G} M_w. \]
 
Let $U_w^1 \subset \cO_w^*$ denote the group of 1-units.  
 Define
\[
% J &= \prod_{v \not\in \Sigma \cup \Sigma'}^{\sim}  \cO_w^*  \prod_{v \in \Sigma}^{\sim}  H_w^* \prod_{v \in \Sigma'}^{\sim} U_w^1, \\
 \tilde{V}_{S'}^{\Sigma'}(K) = \prod_{v \not \in S'  \cup \Sigma'}^{\sim} \cO_w^* \prod_{v \in S'}^{\sim} V_w  \prod_{v \in \Sigma'}^{\sim} U_w^1.
%   W &= \prod_{v \in S' - \Sigma}^{\sim} W_w \prod_{v \in \Sigma}^{\sim} \Delta G_w,
\]
Now, there is a canonical extension (see pg. 148 of \cite{rw}) 
\begin{equation} \label{e:cog}
\begin{tikzcd}
 0 \ar[r] &  C_H = \A_H^*/H^* \ar[r] &  \fO \ar[r] &  \Delta G \ar[r] &  0  
 \end{tikzcd}
\end{equation}
associated to the global fundamental class in $H^2(G, C_H)$, and a canonical surjective map
\[ \theta_V \colon \tilde{V}_{S'}^{\Sigma'}(K) \lra \fO. \] 
We define
\[ V_{S'}^{\Sigma'}(K) = \ker(\theta_V). \]
 
Next we let \[   \tilde{B}_{S'}(K) = \prod_{v \in S'} \Z[G_K]. \]
We define a map 
\[ \theta_B \colon \tilde{B}_{S'}(K) \lra \Z[G_K] \]
componentwise as follows.
\begin{itemize}
\item If $v \in \Sigma$, the component of $\theta_B$ at $v$ is the identity.
\item If $v \in S' \setminus \Sigma$, the component of $\theta_B$ at $v$ is multiplication by $1 - \sigma_v^{-1}$, where $\sigma_v$ is the arithmetic Frobenius (note that by our assumptions, $v \in S' \setminus \Sigma$ implies that $K/F$ is unramified at $v$).
\end{itemize}
We then define
\[ B_{S'}(K) = \ker(\theta_B). \]
To proceed, we must choose a splitting of the map $\theta_B$.  We do this once and for all by picking an infinite place $\infty$ of $F$, since the component of $\theta_B$ at $\infty$ is the identity.  This yields an isomorphism
\begin{equation} \label{e:bisom}
 B_{S'}(K) \cong \prod_{v \in S' \setminus \{\infty\}} \Z[G_K]. \end{equation}
Next we define 
\[ \tilde{f}_{\Sigma}^{\Sigma'}(K) \colon  \tilde{V}_{S'}^{\Sigma'}(K) \lra  \tilde{B}_{S'}(K) \]
 componentwise.
\begin{itemize}
\item If $v \in \Sigma$, the component of  $\tilde{f}_{\Sigma}^{\Sigma'}(K)$ at $v$ is induced by the composition of the map $V_w \rightarrow \Delta G_w$ with the canonical injection $\Delta G_w \subset \Z[G_w]$.
\item If $v \in S' \setminus \Sigma$, component of  $\tilde{f}_{\Sigma}^{\Sigma'}(K)$ at $v$ is induced by the composition of $V_w \rightarrow W_w$ with the isomorphism $W_w \cong \Z[G_w]$ given by projection on to the second coordinate, which as noted above is an isomorphism since $v$ is unramified.
\end{itemize}
It can be shown that $\tilde{f}_{\Sigma}^{\Sigma'}(K)$ restricts to a map
\[ f_{\Sigma}^{\Sigma'}(K) \colon  {V}_{S'}^{\Sigma'}(K) \lra  {B}_{S'}(K). \]
The module $\nabla_{\Sigma}^{\Sigma'}(K)$ is defined to be the cokernel of $f_{\Sigma}^{\Sigma'}(K)$.

\subsection{Functorial Properties}\label{s:functoriality}

We now recall some properties of the map $f_{\Sigma}^{\Sigma'}$ described above.
Let $E/F$ denote any field extension  contained in $K$. 
Fix a prime $p$.
We prove in \cite{dk} that if the sets of primes $\Sigma, \Sigma'$ satisfy certain conditions, then the modules $V_{S'}^{\Sigma'}(E)_p$ and $B_{S'}^{\Sigma'}(E)_p$ are free $\Z_p[G_E]$-modules of rank $(\#S' - 1)$. 
After choosing bases, it therefore makes sense to consider the determinant of the map
\[ f_{\Sigma}^{\Sigma'}(E)_p \colon  V_{S'}^{\Sigma'}(E)_p \lra B_{S'}^{\Sigma'}(E)_p. \]

We now consider three specific cases, each of which satisfies the necessary conditions:
\begin{enumerate}
\item $f_{\Sigma}^{\Sigma'}(E)_p$, where $\Sigma = \Sigma(K), \Sigma' = \Sigma'(K)$ are defined in (\ref{e:sigmadef}).
\item $f_{\Sigma-\{v\}}^{\Sigma'}(E)_p$, where $v \in \Sigma$ is unramified in $E/F$.
\item $f_{\Sigma}^{\Sigma' \cup \{ v \}}(E)_p$, where $v \notin S' \cup \Sigma'$.
\end{enumerate}
We use the same auxiliary set $S'$ in all three cases.

Using the isomorphism (\ref{e:bisom}), it is clear that a choice of basis for $B_{S'}(K)$ induces a basis for $B_{S'}(E)$ by taking $\Gal(K/E)$-coinvariants; in fact the isomorphism yields a canonical basis.

Similarly, a choice of basis for each of $V_{S'}^{\Sigma'}(K)_p$ and 
$V_{S'}^{\Sigma' \cup \{ v \}}(K)_p$ induces bases for $V_{S'}^{\Sigma'}(E)_p$ and $V_{S'}^{\Sigma' \cup \{ v \}}(E)_p$  by taking co-invariants. This follows from the following.
\begin{lemma}\label{l:coinv}
The map $f_{\Sigma}^{\Sigma'}(E)$ equals the $\Gal(K/E)$ co-invariants of the map $f_{\Sigma}^{\Sigma'}(K)$.
\end{lemma}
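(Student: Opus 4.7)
The key is that the entire Ritter--Weiss construction is functorial in $K$, with functoriality under restriction to a subfield $E$ corresponding to taking $H$-coinvariants, where $H = \Gal(K/E)$. The plan is to verify the claim by: (a) identifying $\tilde{B}_{S'}(K)_H$ and $\tilde{V}_{S'}^{\Sigma'}(K)_H$ with their $E$-counterparts; (b) showing that the defining short exact sequences for $B_{S'}$ and $V_{S'}^{\Sigma'}$ remain exact after taking $H$-coinvariants, so that $B_{S'}(K)_H \cong B_{S'}(E)$ and $V_{S'}^{\Sigma'}(K)_H \cong V_{S'}^{\Sigma'}(E)$; and (c) checking that the componentwise definition of $\tilde{f}_{\Sigma}^{\Sigma'}$ is compatible with these identifications.

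For the target, $\tilde{B}_{S'}(K) = \prod_{v\in S'}\Z[G_K]$ obviously has $H$-coinvariants $\tilde{B}_{S'}(E)$. The defining sequence $0 \to B_{S'}(K) \to \tilde{B}_{S'}(K) \to \Z[G_K] \to 0$ is split as $\Z[G_K]$-modules via the choice of the infinite place $\infty$ (since the component of $\theta_B$ at $\infty$ is the identity), hence remains exact under $(-)_H$, giving $B_{S'}(K)_H \cong B_{S'}(E)$.

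For the source, fix for each $v$ a place $w$ of $K$ above $v$ and set $H_w = H \cap G_w$. Shapiro's lemma identifies the $H$-coinvariants of $\Ind_{G_w^K}^{G_K} M_w$ with $\bigoplus_{w'\mid v \text{ in } E}\Ind_{G_{w'}^E}^{G_E}(M_w)_{H_w}$. Applied to each factor, together with the standard functoriality $(\cO_w^*)_{H_w} \cong \cO_{w'}^*$, $(V_w)_{H_w} \cong V_{w'}$ (which follows from local class field theory and compatibility of the local fundamental class with restriction), and $(U_w^1)_{H_w} \cong U_{w'}^1$, this yields $\tilde{V}_{S'}^{\Sigma'}(K)_H \cong \tilde{V}_{S'}^{\Sigma'}(E)$. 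To push this through the sequence $0 \to V_{S'}^{\Sigma'}(K) \to \tilde{V}_{S'}^{\Sigma'}(K) \to \fO \to 0$, we need $H_1(H, \fO) = 0$, together with $\fO_H$ identifying canonically with the analogous module for $E/F$. Both statements rest on the fact that the extension class of $\fO$ is the global fundamental class in $H^2(G_K, C_K)$, whose restriction to $H$ is the global fundamental class in $H^2(H, C_K)$, itself the inflation from $E/F$. Concretely one combines the Tate--Nakayama vanishing for cohomologically trivial factors of $\fO$ with the compatibility $(C_K)_H \to C_E$ coming from the norm, as established in \cite{rw} and reviewed in \cite{dk}*{Appendix A}.

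Finally, the componentwise description of $\tilde{f}_{\Sigma}^{\Sigma'}$---namely $V_w \to \Delta G_w \hookrightarrow \Z[G_w]$ for $v \in \Sigma$, and $V_w \to W_w \cong \Z[G_w]$ (projection to the second coordinate) for $v \in S'\setminus\Sigma$---consists entirely of maps that manifestly commute with $H_w$-coinvariants, since $\Delta G_{w_K} \hookrightarrow \Z[G_{w_K}]$ descends to $\Delta G_{w_E} \hookrightarrow \Z[G_{w_E}]$ and likewise for the $W_w$ projection (which remains an isomorphism in the $E$ setting since $v$ is still unramified). Assembling the three steps gives $f_{\Sigma}^{\Sigma'}(K)_H = f_{\Sigma}^{\Sigma'}(E)$. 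The principal obstacle is step (b) on the source side---the cohomological input $H_1(H, \fO) = 0$---which is the one genuinely non-formal ingredient; everything else is bookkeeping with induced modules.
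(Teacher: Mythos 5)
Your overall strategy is exactly the one the paper relies on (the paper simply cites \cite{dk}*{Appendix B} for this lemma): take $H$-coinvariants termwise through the defining sequences for $B_{S'}$ and $V_{S'}^{\Sigma'}$, using the split sequence for $B$, Shapiro's lemma together with local functoriality of the $V_w$ for $\tilde V$, and the global fundamental class (cohomological triviality of $\fO$ via Tate--Nakayama, plus compatibility of $\fO_H$ with $\fO(E)$) for the kernel step. Your identification of $H_1(H,\fO)=0$ as the one genuinely non-formal ingredient is correct.

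One slip worth flagging: your Shapiro statement is written as
$$\bigl(\Ind_{G_w^K}^{G_K} M_w\bigr)_H \;\cong\; \bigoplus_{w'\mid v \text{ in } E}\Ind_{G_{w'}^E}^{G_E}(M_w)_{H_w},$$
which overcounts. Since $\Ind_{G_w^K}^{G_K}M_w$ already packages all places of $K$ above $v$, its $H$-coinvariants are a \emph{single} induced module $\Ind_{G_{w'}^E}^{G_E}(M_w)_{H_w}$ for the one chosen $w'\mid v$ (equivalently, $\prod_{w'\mid v}(M_w)_{H_{w}}$ as a $G_E$-set), not a direct sum of such induced modules indexed by $w'$; the version you wrote would inflate the $\Z$-rank by the factor $[G_E : G_{w'}^E]$. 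With that correction, the argument goes through, and it is the argument the paper intends.
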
 
\begin{proof}
See \cite{dk}*{Appendix B}. 
\end{proof}

\begin{prop}\label{p:functoriality} We have:
\begin{enumerate}
\item $\det(f_{\Sigma}^{\Sigma'}(E)_p) = \red_{K/E}(\det(f_{\Sigma}^{\Sigma'}(K)_p))$
\item $\det(f_{\Sigma}^{\Sigma'}(E)_p) = (1 - \sigma_v^{-1}) \det(f_{\Sigma-\{v\}}^{\Sigma'}(E)_p)$
\item Suppose $v \not\in S'$.  Then \[ \det(f_{\Sigma}^{\Sigma' \cup \{ v \}}(E)_p) = \red_{K/E}({x}_K)(1 - \sigma^{-1}_v \N v)  \det(f_{\Sigma}^{\Sigma'}(E)_p) \] for some $x_K \in \Z_p[G_K]^*$ that does not depend on $E$.
\end{enumerate}
\end{prop}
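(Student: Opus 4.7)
The plan is to prove the three parts in turn, all by exploiting the componentwise nature of the definition of $\tilde f$ in~\S\ref{s:rwm}. Part (1) is immediate from Lemma~\ref{l:coinv}, which identifies $f_\Sigma^{\Sigma'}(E)_p$ with the base change of $f_\Sigma^{\Sigma'}(K)_p$ along the surjection $\Z_p[G_K] \twoheadrightarrow \Z_p[G_E]$: determinants of square matrices are preserved by any homomorphism of commutative rings.

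For part (2), both maps share the same domain $V_{S'}^{\Sigma'}(E)_p$ and, choosing $\infty \in \Sigma \cap (\Sigma-\{v\})$ in~(\ref{e:bisom}) (possible since $v$ is finite), the same basis of the codomain. The components of $\tilde f$ at $v' \neq v$ agree in the two cases. At $v' = v$, the component is $V_w \to \Delta G_{E,w}$ when $v \in \Sigma$ and $V_w \to W_w \to \Z_p[G_{E,w}]$ via second-coordinate projection when $v \notin \Sigma$. Since $v$ is unramified in $E/F$, the inertia $I_{E,w}$ is trivial, and the relation $\overline{r} = (1 - \sigma_w^{-1}) s$ defining $W_w$ in~(\ref{e:rww}) shows that the first map equals $(1 - \sigma_v^{-1})$ times the second. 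Multiplying the $v$-row of the matrix by $(1 - \sigma_v^{-1})$ multiplies the determinant by the same factor.

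For part (3), the codomain $B_{S'}(E)_p$ is unchanged, and componentwise inspection shows that $f_\Sigma^{\Sigma' \cup \{v\}}(E)_p = f_\Sigma^{\Sigma'}(E)_p \circ \iota_E$, where $\iota_E : V_{S'}^{\Sigma' \cup \{v\}}(E)_p \hookrightarrow V_{S'}^{\Sigma'}(E)_p$ is the natural inclusion. By the coinvariance property of the $V$-modules (analogous to Lemma~\ref{l:coinv}), $\iota_E$ is the base change of $\iota_K$, so it suffices to compute $\det(\iota_K)$. At the $\tilde V$-level, $\iota_K$ is the inclusion $U_w^1 \hookrightarrow \cO_w^*$ at the $v$-coordinate with identities elsewhere, whose cokernel is $\Ind_{G_w}^{G_K}(\cO_w^*/U_w^1) = \Ind_{G_w}^{G_K} k_w^*$. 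A snake lemma applied to the two exact sequences $0 \to V_{S'}^{\star}(K) \to \tilde V_{S'}^{\star}(K) \to \fO \to 0$ for $\star \in \{\Sigma', \Sigma' \cup \{v\}\}$, with identity on $\fO$, shows that $\coker(\iota_K)$ itself is isomorphic to this. Since $v \notin S' \cup \Sigma' \supset S_{\ram}(K/F)$, $v$ is unramified in $K/F$, and $\sigma_v$ generates $G_w$ and acts on $k_w^*$ as multiplication by $\N v$; therefore $\Ind_{G_w}^{G_K} k_w^* \otimes \Z_p \cong \Z_p[G_K]/(1 - \sigma_v^{-1} \N v)$, and Fitting ideal considerations yield $\det(\iota_K) \cdot \Z_p[G_K] = (1 - \sigma_v^{-1} \N v)$.

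The main obstacle is to upgrade this equality of ideals to a genuine factorization $\det(\iota_K) = x_K(1 - \sigma_v^{-1} \N v)$ with $x_K$ a unit. For this we show that $1 - \sigma_v^{-1} \N v$ is a non-zerodivisor in $\Z_p[G_K]$: its image under every character $\chi$ of $G_K$ is $1 - \chi(\sigma_v)^{-1} \N v \neq 0$, since $\chi(\sigma_v)$ is a root of unity of absolute value $1 < \N v$. The non-zerodivisor property then forces the existence of a unique $x_K \in \Z_p[G_K]^*$, manifestly independent of $E$, and combining with the factorization of $f_\Sigma^{\Sigma' \cup \{v\}}(E)_p$ above completes the proof of (3).
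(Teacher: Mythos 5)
Your proposal is correct and follows essentially the same route as the paper: part (1) via Lemma~\ref{l:coinv}, part (2) by componentwise comparison showing the $v$-component map for $\Sigma$ is $(1-\sigma_v^{-1})$ times that for $\Sigma - \{v\}$, and part (3) by factoring through the inclusion $\iota$ and identifying $\coker(\iota)$ with $\Ind_{G_w}^{G_K}(\F_w^*)_p$, whose Fitting ideal is $(1-\sigma_v^{-1}\N v)$. The one place you add useful explicitness beyond the paper is the character-theoretic verification that $1 - \sigma_v^{-1}\N v$ is a non-zerodivisor, which is exactly what is needed to upgrade the Fitting-ideal equality to $\det(\iota_K) = x_K(1-\sigma_v^{-1}\N v)$ with $x_K$ a unit.
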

\begin{proof} 

(1) This follows immediately from Lemma \ref{l:coinv}.  

\medskip

(2) Consider the map of complexes:

\[ 
\begin{tikzcd}[row sep=large, column sep = huge]
V_{S'}^{\Sigma'}(E)_p \arrow{r}{f_{\Sigma}^{\Sigma'}(E)_p} \arrow{d} & B_{S'}(E)_p \arrow{d} \\
 V_{S'}^{\Sigma'}(E)_p \arrow{r}{f_{\Sigma-\{v\}}^{\Sigma'}(E)_p}& B_{S'}(E)_p. \end{tikzcd} \]

The first vertical arrow is an equality, while the second vertical arrow is equal to \[ (\id, 1-\sigma_v^{-1}) \colon N \oplus \Z_p[G_E] \longrightarrow N \oplus \Z_p[G_E], \] where $N$ is a free $\Z_p[G_E]$-module. This has determinant $(1-\sigma_v^{-1})$, and so we conclude that $\det(f_{\Sigma}^{\Sigma'}(E)_p) = (1 - \sigma_v^{-1}) \det(f_{\Sigma-\{v\}}^{\Sigma'}(E)_p)$.

\medskip

(3) Consider the map of complexes 
\[ \begin{tikzcd}[row sep=large, column sep = huge]
V_{S'}^{\Sigma' \cup \{ v \}}(E)_p \arrow{r}{f_{\Sigma}^{\Sigma' \cup \{ v \}}(E)_p} \arrow{d}& B_{S'}(E)_p \arrow{d} \\
V_{S'}^{\Sigma'}(E)_p \arrow{r}{f_{\Sigma}^{\Sigma'}(E)_p} & B_{S'}(E)_p. \end{tikzcd}
\] 

This map of complexes is injective, with cokernel
\[  \Ind_{\Z[G_{E_w}]}^{\Z[G_E]}(\F_w^*)_p \longrightarrow 0, \]
where $w$ is a place of $E$ over $v$ and $\F_w$ is its residue field.
This uses the fact that the auxiliary set $S'$ used to define $V_{S'}^{\Sigma'}(E)$ does not include $v$.

Since the domain and codomain of \[ h_E \colon V_{S'}^{\Sigma' \cup \{ v \}}(E)_p \longrightarrow V_{S'}^{\Sigma'}(E)_p \] are projective, $h_E$ is a quadratic presentation of $\Ind_{\Z[G_{E_w}]}^{\Z[G_E]}(\F_w^*) \otimes \Z_p[G_{E}]$. The Fitting ideal of $\Ind_{\Z[G_{E_w}]}^{\Z[G_E]}(\F_w^*) \otimes \Z_p[G_{E}]$ as a $\Z_p[G_E]$-module is $(1 - \sigma^{-1}_v \N v)$. Hence, for any choice of basis for the domain and codomain of $h_E$, we have $\det(h_E) = x_E  (1 - \sigma^{-1}_v \N v)$ for some $x_E \in \Z_p[G_E]^*$. 

Note that $h_E$ equals the $\Gal(K/E)$ co-invariants of the map $h_K$. Thus if we use bases for the domain and codomain of $h_E$ induced from a choice of bases for $h_K$, then we may take $x_E = \red_{K/E}({x}_K)$.
We conclude that \[ \det(f_{\Sigma}^{\Sigma' \cup \{ v \}}(E)_p) = \red_{K/E}({x}_K)(1 - \sigma^{-1}_v \N v)  \det(f_{\Sigma}^{\Sigma'}(E)_p). \]
\end{proof}

Consider the maps $f_K = f_{\Sigma(K)}^{\Sigma'(K)}(K)_p$ and $f_E = f_{\Sigma(E)}^{\Sigma'(K)}(E)_p$ from the introduction. From parts 1 and 2 of Proposition~\ref{p:functoriality}, we obtain:
\begin{corollary}
\begin{equation}\label{e:fnorm}
 \red_{K/E}(\det(f_K)) = P(K/E) \det(f_E) .
\end{equation}
\end{corollary}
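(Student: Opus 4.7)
The corollary should follow by combining parts (1) and (2) of Proposition~\ref{p:functoriality} in sequence, using $f_{\Sigma(K)}^{\Sigma'(K)}(E)_p$ as an intermediate step bridging $f_K$ and $f_E$.

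The plan is first to apply part (1) of Proposition~\ref{p:functoriality} with $\Sigma = \Sigma(K)$ and $\Sigma' = \Sigma'(K)$, which gives
\[ \red_{K/E}(\det(f_K)) = \det(f_{\Sigma(K)}^{\Sigma'(K)}(E)_p). \]
This converts the global map over $K$ into the analogous map over $E$, but still with the larger depletion set $\Sigma(K)$ rather than $\Sigma(E)$.

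Next I would enumerate the primes in the difference $\Sigma(K) \setminus \Sigma(E)$. By the definition in \eqref{e:sigmadef}, these are precisely the finite primes $v \mid p$ that are ramified in $K/F$ but not in $E/F$; in particular they are unramified in $E/F$, which is exactly the hypothesis needed to invoke part (2) of Proposition~\ref{p:functoriality}. Iterating part (2) once for each such $v$ yields
\[ \det(f_{\Sigma(K)}^{\Sigma'(K)}(E)_p) = \prod_{v \in \Sigma(K) \setminus \Sigma(E)} (1 - \sigma_v^{-1}) \cdot \det(f_{\Sigma(E)}^{\Sigma'(K)}(E)_p) = P(K/E) \det(f_E), \]
using the definition of $P(K/E)$ in \eqref{e:pdef} and noting that each intermediate depletion set still satisfies the hypotheses needed for part (2). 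Combining the two displays gives the claimed identity.

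There is no serious obstacle here: the only small point to check is compatibility of the induced bases used at each step. The bases on the $B$ side are canonical via \eqref{e:bisom}, and on the $V$ side Lemma~\ref{l:coinv} guarantees that taking $\Gal(K/E)$ co-invariants is compatible with our choice of bases, so the determinants transform as stated and the successive applications of part (2) can be made with consistent bases without introducing spurious unit factors.
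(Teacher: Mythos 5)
Your proof is correct and matches the paper's intended argument exactly: the paper simply states that the corollary follows from parts (1) and (2) of Proposition~\ref{p:functoriality}, and you have spelled out precisely how---applying (1) to pass from $K$ to $E$ with depletion set $\Sigma(K)$, then iterating (2) over the primes in $\Sigma(K)\setminus\Sigma(E)$ (each unramified in $E/F$) to produce the factor $P(K/E)$. Nothing to add.
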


Consider a finite abelian CM extension $K'/F$ containing $K$ such that $\Sigma(K') = \Sigma(K)$, with maps $g_{E'} := f_{\Sigma(E')}^{\Sigma'(K')}(E')_p$  for any CM field $E' \subset K'$ containing $F$. Define \begin{equation} Q(K'/K) = \prod_{v \in \Sigma'(K') - \Sigma'(K)} (1 - \sigma^{-1}_v \N v). \end{equation}

Note that the map $g_{K'}$ may require a larger auxiliary set $S'$ than $f_K$ did. To deal with this, we use the following lemma:
\begin{lemma}\label{l:aux-change}
Let $S'_1 \subset S'_2$ be two auxiliary sets for $(K, \Sigma, \Sigma')$, with maps denoted $f_\Sigma^{\Sigma'}(K)_1$ and $f_\Sigma^{\Sigma'}(K)_2$. A choice of bases for the domain and codomain of $f_{\Sigma}^{\Sigma'}(K)_1$ naturally determines bases for the domain and codomain of $f_\Sigma^{\Sigma'}(K)_2$, and with these bases, $\det(f_{\Sigma}^{\Sigma'}(K)_1) = \det(f_{\Sigma}^{\Sigma'}(K)_2)$.
\end{lemma}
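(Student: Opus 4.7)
The plan is to induct on $\#(S_2' \setminus S_1')$ and reduce to the case where $S_2' = S_1' \cup \{v\}$ for a single place $v$. Since $S_1' \supset \Sigma$ and $S_2' \cap \Sigma' = \emptyset$ while $\Sigma \cup \Sigma' \supset S_{\ram}(K/F)$, the new place $v$ must be unramified in $K/F$ and lie in $S_2' \setminus \Sigma$. Consequently, the component of $\tilde f_{\Sigma}^{\Sigma'}(K)_2$ at $v$ is the composition $V_w \to W_w \cong \Z[G_w]$, where the isomorphism is the second-coordinate projection of \eqref{e:rww}; this map is surjective with kernel $\cO_w^*$ by \eqref{e:wseq}.

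Next I would compare the ambient complexes. Since $\tilde V_{S_1'}^{\Sigma'}(K)$ and $\tilde V_{S_2'}^{\Sigma'}(K)$ differ only at the $v$-component (where $\cO_w^*$ is enlarged to $V_w$), I obtain a short exact sequence $0 \to \tilde V_{S_1'}^{\Sigma'}(K) \to \tilde V_{S_2'}^{\Sigma'}(K) \to \Z[G] \to 0$, using the identification $\Ind_{G_w}^G(V_w/\cO_w^*) \cong \Ind_{G_w}^G \Z[G_w] \cong \Z[G]$. Similarly the inclusion $\tilde B_{S_1'}(K) \hookrightarrow \tilde B_{S_2'}(K)$ has cokernel $\Z[G]$ from the extra $\Z[G_K]$-factor at $v$, and by the description of the $v$-component of $\tilde f_2$ above, the induced map between these quotients is the identity on $\Z[G]$. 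Since $\theta_V$ and $\theta_B$ are surjective on both $\tilde V_{S_i'}^{\Sigma'}(K)$ and $\tilde B_{S_i'}(K)$ for $i=1,2$ (which uses $\Cl^{\Sigma'}_{S_i'}(K)=1$), the snake lemma applied to the natural maps of short exact sequences $0 \to V_{S_i'}^{\Sigma'}(K) \to \tilde V_{S_i'}^{\Sigma'}(K) \to \fO \to 0$ (with identity on $\fO$) then gives $0 \to V_{S_1'}^{\Sigma'}(K) \to V_{S_2'}^{\Sigma'}(K) \to \Z[G] \to 0$, and similarly $0 \to B_{S_1'}(K) \to B_{S_2'}(K) \to \Z[G] \to 0$, with $f_{\Sigma}^{\Sigma'}(K)_2$ restricting to $f_{\Sigma}^{\Sigma'}(K)_1$ on the sub and inducing the identity on the quotient.

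After tensoring with $\Z_p$, both sequences split since $\Z_p[G]$ is projective. A basis of $B_{S_1'}(K)_p$ extends canonically to one of $B_{S_2'}(K)_p$ by appending the standard generator of the $v$-th factor in \eqref{e:bisom}, and a basis of $V_{S_1'}^{\Sigma'}(K)_p$ extends to $V_{S_2'}^{\Sigma'}(K)_p$ by adjoining any lift of the generator $1 \in \Z_p[G]$. In these bases the matrix of $f_{\Sigma}^{\Sigma'}(K)_2$ is block upper triangular with the matrix of $f_{\Sigma}^{\Sigma'}(K)_1$ in the upper-left block and $1$ in the lower-right, so the two determinants agree (and in particular the choice of lift does not affect the determinant, since varying it by an element of $V_{S_1'}^{\Sigma'}(K)_p$ only modifies the upper-right block). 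The main technical point requiring care is verifying that the induced map on quotients really is the identity on $\Z[G]$: this comes down to carefully tracing through the identifications $V_w/\cO_w^* \cong W_w \cong \Z[G_w]$ together with the precise definition of the $v$-component of $\tilde f$ at an unramified place outside $\Sigma$.
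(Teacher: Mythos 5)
Your proposal is correct and follows essentially the same route as the paper: reduce by induction to adjoining a single place $v$, exhibit the injective map of two-term complexes with cokernel $\Z_p[G_K] \xrightarrow{\,\sim\,} \Z_p[G_K]$, and conclude via block upper-triangularity of the resulting matrix. You have merely expanded the paper's terse ``the comparison of determinants is then clear'' by tracing the local identifications $V_w/\cO_w^* \cong W_w \cong \Z[G_w]$ at the unramified place $v$ and by running the snake lemma against the $\fO$-sequences to descend the cokernel computation from $\tilde V, \tilde B$ to $V, B$.
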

\begin{proof}
By induction, we add one prime at a time, and consider $S'_1 = S'$, $S'_2 = S' \cup \{ v \}$. The injective map of complexes
\[\begin{tikzcd}[row sep=large, column sep = huge]
V_{S'}^{\Sigma'}(K)_p \arrow{r}{f_{\Sigma}^{\Sigma'}(K)_1} \arrow{d}& B_{S'}(K)_p \arrow{d} \\
V_{S' \cup \{ v \}}^{\Sigma'}(K)_p \arrow{r}{f_{\Sigma}^{\Sigma'}(K)_2} & B_{S'  \cup \{ v \}}(K)_p \end{tikzcd}\]
has cokernel
\[ \begin{tikzcd}[row sep=large, column sep = large] \Z_p[G_K] \arrow{r}{\cong} & \Z_p[G_K]. 
\end{tikzcd} \]

Given a basis of $V_{S'}^{\Sigma'}(K)_p$ and any choice of splitting of the surjection \[ V_{S' \cup \{ v \}}^{\Sigma'}(K) \longrightarrow \Z_p[G_K], \] we obtain a basis of $V_{S' \cup \{ v \}}^{\Sigma'}(K)$. The comparison of determinants is then clear.
\end{proof}

%Combining Proposition \ref{p:functoriality} (3) and Lemma \ref{l:aux-change}, we obtain the following corollary. 
\begin{corollary}\label{c:q-factor}
There exists a unit $x_K \in \Z_p[G_K]^*$ such that, for any CM field $E\subset K$ containing $F$, 
\[ \det(g_E) = \red_{K/E}({x}_K)Q(K'/K) \det(f_E). \]
\end{corollary}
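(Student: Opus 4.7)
The strategy is to interpolate between the smoothing sets $\Sigma'(K)$ and $\Sigma'(K')$ by adding the primes of $\Sigma'(K') \setminus \Sigma'(K)$ one at a time, applying Proposition~\ref{p:functoriality}(3) at each step. Enumerate these primes as $\{v_1, \ldots, v_n\}$ and set $\Sigma'_i := \Sigma'(K) \cup \{v_1, \ldots, v_i\}$, so that $\Sigma'_0 = \Sigma'(K)$ and $\Sigma'_n = \Sigma'(K')$. For each CM field $E$ with $F \subset E \subset K$, write $f^{(i)}_E := f_{\Sigma(E)}^{\Sigma'_i}(E)_p$; thus $f^{(0)}_E = f_E$ and $f^{(n)}_E = g_E$.

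Before iterating, I would harmonize auxiliary sets. Using Lemma~\ref{l:aux-change}, I can enlarge $S'$ (without altering the determinants once the induced bases are used) so that it serves as an auxiliary set for every triple $(K, \Sigma(K), \Sigma'_i)$ simultaneously. This is possible: choosing $S'$ sufficiently large (avoiding the finitely many primes in $\Sigma'(K')$) makes $\Cl_{S'}^{\Sigma'(K')}(K) = 1$, whence the natural surjections $\Cl_{S'}^{\Sigma'(K')}(K) \twoheadrightarrow \Cl_{S'}^{\Sigma'_i}(K)$ force triviality for all intermediate $i$, and the disjointness condition $S' \cap \Sigma'(K') = \emptyset$ guarantees in particular that $v_i \notin S'$ for all $i$. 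Since the $v_i$ are distinct, we also have $v_i \notin \Sigma'_{i-1}$. Therefore Proposition~\ref{p:functoriality}(3) applies at each step $i$ and produces a unit $x_K^{(i)} \in \Z_p[G_K]^*$, independent of $E$, with
\[
\det(f^{(i)}_E) = \red_{K/E}\bigl(x_K^{(i)}\bigr)\,(1 - \sigma_{v_i}^{-1}\N v_i)\,\det(f^{(i-1)}_E).
\]

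Multiplying these equalities telescopically for $i = 1, \ldots, n$ and setting $x_K := \prod_{i=1}^{n} x_K^{(i)} \in \Z_p[G_K]^*$, I would obtain
\[
\det(g_E) = \red_{K/E}(x_K)\prod_{i=1}^{n}(1 - \sigma_{v_i}^{-1}\N v_i)\,\det(f_E) = \red_{K/E}(x_K)\,Q(K'/K)\,\det(f_E),
\]
using that $\red_{K/E}$ is a ring homomorphism and the definition of $Q(K'/K)$. The only delicate point in the plan is the initial harmonization of the auxiliary set $S'$—one must simultaneously secure class-group triviality and decomposition-group surjectivity for all intermediate $\Sigma'_i$ while avoiding the $v_i$, and then invoke Lemma~\ref{l:aux-change} to compare determinants across different choices of $S'$. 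Once this bookkeeping is in place, the iteration is routine.
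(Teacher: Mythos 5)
Your proposal is correct and takes essentially the same approach as the paper: harmonize the auxiliary set $S'$ via Lemma~\ref{l:aux-change} so that the same $S'$ works for both $\Sigma'(K)$ and $\Sigma'(K')$, then apply Proposition~\ref{p:functoriality}(3) one prime at a time to each $v \in \Sigma'(K') \setminus \Sigma'(K)$ and take the product. The paper compresses this into a single sentence; you spell out the bookkeeping (intermediate sets $\Sigma'_i$, triviality of the intermediate ray class groups, disjointness from $S'$) that the paper leaves implicit.
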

\begin{proof}
Since $g_E =  f_{\Sigma(E)}^{\Sigma'(K')}(E)_p$ and $f_E = f_{\Sigma(E)}^{\Sigma'(K)}(E)_p$ with the same auxiliary set $S'$ by Lemma \ref{l:aux-change}, we apply Proposition \ref{p:functoriality} (3) to the primes $v \in \Sigma'(K') - \Sigma'(K)$. 
\end{proof}

\section{Norm compatible families}

Recall that we have defined units $y_{E} \in (\Z_p[G_E]_{-})^*$ such that \[ \det(f_{E,-}) = y_E \Theta_{\Sigma(E),\Sigma'(K)}(E/F). \]
In this section we first prove Proposition \ref{p:euler}, which states that the units $(e_E y_E)_{E \subset K}$ are a norm compatible family.
Next we prove that this norm compatible family extends over CM abelian extensions $K'/F$ containing $K$.  Finally, we prove Proposition~\ref{p:main} on the scarcity of norm compatible families that enjoy this extension property.

\subsection{Construction and extension of a norm compatible family}
\label{s:norm}

\begin{proof}[Proof of Proposition \ref{p:euler}]
From the definitions of the Stickelberger elements, we have
\begin{equation}\label{e:tnorm}
 \red_{E'/E}(\Theta_{\Sigma(E'),\Sigma'(K)}(E'/F)) = \Theta_{\Sigma(E),\Sigma'(K)}(E/F) P(E'/E) 
 \end{equation}
 in $\Z_p[G_E]_-.$
Combining (\ref{e:fy}), (\ref{e:fnorm}) and (\ref{e:tnorm}), we find
% and noting that $Q(K/E)$ is a non-zero divisor that can be cancelled, we find
\begin{equation} \label{e:twoyst}
 (\red_{E'/E}({y}_{E'}) - y_E)  \Theta_{\Sigma(E),\Sigma'(K)}(E/F) P(E'/E)  = 0. \end{equation}
We project to the quotient $R_E$ of $\Z_p[G_E]_-$, on which $ \Theta_{\Sigma(E),\Sigma'(K)}(E/F)$ is a non-zerodivisor and can be canceled.  This yields \[ (e_E \red_{E'/E}({y}_{E'}) - e_E y_E)  P(E'/E)  = 0 \] in $R_E$ as desired.
\end{proof}

\begin{prop}\label{p:extends}  Let $K'/F$ be a finite abelian CM extension such that \[ K' \supset K \text{ and } \Sigma(K') = \Sigma(K). \]  The norm compatible family $(e_E y_E)_{E \subset K}$ extends to a norm compatible family $(e_E y_E)_{E \subset K'}$.
\end{prop}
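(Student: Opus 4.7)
The plan is to apply Theorem~\ref{t:fitt} to the triple $(K',\Sigma(K'),\Sigma'(K'))$ to obtain, for each CM field $E'\subset K'$ containing $F$, a unit $y'_{E'}\in(\Z_p[G_{E'}]_-)^*$ satisfying $\det(g_{E',-})=y'_{E'}\,\Theta_{\Sigma(E'),\Sigma'(K')}(E'/F)$. The family $(e_{E'}y'_{E'})_{E'\subset K'}$ will serve as the candidate extension, pending a global twist needed to match $(e_E y_E)$ on $E\subset K$.

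The main obstacle is that the cancellation at the heart of Proposition~\ref{p:euler} does not obviously transfer to $K'$: the Stickelberger element $\Theta_{\Sigma(E),\Sigma'(K')}(E/F)$ differs from $\Theta_{\Sigma(E),\Sigma'(K)}(E/F)$ by the extra factor $Q(K'/K)$, which is \emph{a priori} only known to be nonzero. My key observation is that $Q(K'/K)$ is actually a non-zero-divisor in $\Z_p[G_E]_-$ for any CM field $E\subset K'$. Indeed, for each finite prime $v\in\Sigma'(K')-\Sigma'(K)$ and any $\overline{\Q}_p$-valued character $\chi$ of $G_E$, the factor $1-\chi(\sigma_v)^{-1}\N v$ is a nonzero element of $\overline{\Q}$ since $\chi(\sigma_v)^{-1}$ is a root of unity while $\N v\geq 2$. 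Hence each factor, and therefore $Q(K'/K)$ itself, is a non-zero-divisor; consequently $\Theta_{\Sigma(E),\Sigma'(K')}(E/F)=Q(K'/K)\,\Theta_{\Sigma(E),\Sigma'(K)}(E/F)$ is a non-zero-divisor in $R_E$ (a product of non-zero-divisors therein). The proof of Proposition~\ref{p:euler} then transfers verbatim to the $K'$-situation (using that Proposition~\ref{p:functoriality} and the norm relation~(\ref{e:fnorm}) hold for $g$ by identical arguments), establishing that $(e_{E'}y'_{E'})_{E'\subset K'}$ is already a norm compatible family.

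Finally, I would identify the restriction of this family to $E\subset K$. Corollary~\ref{c:q-factor} furnishes a unit $x_K\in\Z_p[G_K]^*$ (independent of $E$) with $\det(g_{E,-})=\red_{K/E}(x_K)\,Q(K'/K)\det(f_{E,-})$; combining this with the defining equations of $y_E$ and $y'_E$, and canceling the non-zero-divisor $Q(K'/K)\,\Theta_{\Sigma(E),\Sigma'(K)}(E/F)$ in $R_E$, forces $e_E y'_E=e_E\red_{K/E}(x_K)y_E$ in $R_E$. To correct for this discrepancy, I choose any element $\tilde u\in\Z_p[G_{K'}]_-$ whose image in $\Z_p[G_K]_-$ equals that of $x_K^{-1}$ (such $\tilde u$ exists by surjectivity of the reduction $\Z_p[G_{K'}]_-\to\Z_p[G_K]_-$, and crucially $\tilde u$ need not be a unit), and define $z_{E'}:=e_{E'}y'_{E'}\red_{K'/E'}(\tilde u)\in R_{E'}$ for $E'\subset K'$. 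For $E\subset K$ a direct computation gives $z_E=e_E y_E$, and the norm compatibility of $(z_{E'})_{E'\subset K'}$ is inherited from that of $(e_{E'}y'_{E'})$: the twist $\red_{K'/E'}(\tilde u)$ descends from a fixed element of $\Z_p[G_{K'}]_-$ and therefore pulls out of the norm compatibility relation, reducing it to the already-established compatibility of $(e_{E'}y'_{E'})_{E'\subset K'}$.
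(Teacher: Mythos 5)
Your argument is correct and follows essentially the same route as the paper: apply Theorem~\ref{t:fitt} over $K'$ to get a norm-compatible family $(e_{E'}y'_{E'})_{E'\subset K'}$ via Proposition~\ref{p:euler}, identify the discrepancy $e_E y'_E = e_E\red_{K/E}(x_K)y_E$ on $E\subset K$ using Corollary~\ref{c:q-factor} and cancellation of the non-zerodivisor $\overline{Q(K'/K)}\,\Theta_{\Sigma(E),\Sigma'(K)}(E/F)$, and then twist by a lift from $\Z_p[G_{K'}]_-$. The only (cosmetic) difference is that you lift $x_K^{-1}$ rather than lifting $x_K$ and then inverting, which conveniently sidesteps the question of whether a lift of a unit can be chosen to be a unit.
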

\begin{proof}
Using Theorem~\ref{t:fitt} we define units $w_{E'} \in (\Z_p[G_{E'}]_-)^*$ such that
\begin{equation} \label{e:wdef}
 \det(g_{E'}) = w_{E'} \Theta_{\Sigma(E'),\Sigma'(K')}(E'/F).
 \end{equation}
Applying Proposition \ref{p:euler} to $K'/F$, we have the norm compatibility
\[  e_E(\red_{E'/E}({w}_{E'}) - w_{E})P(E'/E) = 0 \text{ in } R_{E} \]
for any $F \subset E \subset E' \subset K'$.
 Now we have
\begin{equation} \label{e:w1}
 \det(g_E) = w_{E} \Theta_{\Sigma(E),\Sigma'(K')}(E/F) = w_E \Theta_{\Sigma(E),\Sigma'(K)}(E/F) \overline{Q(K'/K)},
\end{equation}
where the bar over the $Q$ denotes reduction to $\Z_p[G_E]_-$.
Meanwhile Corollary \ref{c:q-factor} implies that 
\begin{align}
  \det(g_E) &= \red_{K/E}({x}_K)(\det f_{E}) \overline{Q(K'/K)}  \nonumber \\
  &= \red_{K/E}({x}_K)y_E \Theta_{\Sigma(E),\Sigma'(K)}(E/F)  \overline{Q(K'/K)}. \label{e:w2}
\end{align}

Comparing (\ref{e:w1}) and (\ref{e:w2}) and canceling the expression $\Theta_{\Sigma(E),\Sigma'(K)}(E/F) \overline{Q(K'/K)}$, which is a non-zerodivisor on $R_E$, we obtain 
\[ e_E y_E \red_{K/E}({x}_K) = e_E w_E \text{ in } R_E \]
for each $E \subset K$.

We may therefore choose an arbitrary element $x_{K'} \in \Z_p[G_{K'}]_-$ lifting $x_K$, and we obtain that $(e_{E'} w_{E'} \red_{K'/E'}({x}_{K'})^{-1})_{E' \subset K'}$ is a norm compatible family that is equal to $(e_E y_E)$ for $E \subset K$.
\end{proof}

\subsection{Scarcity of Norm Compatible Familes} \label{s:trivial}

In this section, we prove Proposition~\ref{p:main}. 

\begin{lemma} \label{l:ep}  Let $E/F$ be a finite abelian CM extension, let $v \mid p$ with $v \not\in \Sigma(E)$, and let $m$ be a positive integer.  There exists a CM abelian extension $E'/F$ containing $E$ such that:
\begin{itemize}
 \item $\Sigma(E') = \Sigma(E)$, 
 \item $\#G_{E,w} = \#G_{E',w}$ for all $w \in \Sigma(E)$, 
\item  the Frobenius $\sigma_v$ in $G_{E'}$ has order divisible by $p^m$.
\end{itemize}
\end{lemma}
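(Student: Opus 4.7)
The plan is to reduce Lemma~\ref{l:ep} to constructing an auxiliary CM abelian extension $L/F$ with three properties: (a) $L/F$ is unramified at every $p$-adic prime of $F$; (b) each place $w \in \Sigma(E) \setminus S_\infty$ splits completely in $L/F$; and (c) the Frobenius at $v$ in $\Gal(L/F)$ has order divisible by $p^m$. Given such $L$, I set $E' := EL$. Verifying the three bullet points of the lemma is then routine: by~(a), $\Sigma(L) = S_\infty$, so $\Sigma(E') = \Sigma(E) \cup \Sigma(L) = \Sigma(E)$; by~(b), for $w \in \Sigma(E) \setminus S_\infty$ complete splitting gives $L_w = F_w$ and hence $E'_w = E_w \cdot L_w = E_w$, so the decomposition groups match; at archimedean places in $\Sigma(E)$ both $E$ and $L$ are CM, so the decomposition groups have order two; finally, since $v$ is unramified in both $E$ and $L$, the residue degree of $E'/F$ at $v$ equals $\lcm(f_{E/F,v}, f_{L/F,v})$, which is divisible by $p^m$ by~(c).

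To construct $L$, I will invoke the Grunwald--Wang theorem to produce a global character $\chi$ of the id\`ele class group $C_F$ with prescribed local components at the finite set of places $\{v\} \cup \{u : u \mid p\} \cup (\Sigma(E) \setminus S_\infty) \cup S_\infty$: trivial on $\mathcal{O}_{F_u}^*$ for each $u \mid p$ (forcing (a)); trivial on $F_u^*$ for each $u \in \Sigma(E) \setminus S_\infty$ (forcing (b)); of order at least $p^m$ at a uniformizer $\pi_v$ (forcing (c)); and nontrivial at $-1$ for every real place of $F$ (forcing the cut-out extension $L := F^{\ker \chi}$ to be totally imaginary, hence CM since $F$ is totally real). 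A global character interpolating these independently prescribed local components at finitely many places exists by Grunwald--Wang, and the associated cyclic extension $L$ satisfies (a)--(c). Equivalently, one may build $L = L_0 \cdot L_1$, where $L_0/F$ is a totally real cyclic $p^m$-extension obtained as a subfield of a ray class field of $F$ with conductor at an auxiliary prime $\mathfrak{q} \nmid p$ of sufficiently $p$-divisible norm (chosen by Chebotarev so that the class of $\pi_v$ has order $p^m$ in the appropriate $p^m$-quotient while the classes of $\pi_w$ for $w \in \Sigma(E) \setminus S_\infty$ are trivial), and $L_1 = F(\sqrt{\alpha})$ is an imaginary quadratic extension for $\alpha$ chosen by weak approximation to be totally negative, a unit-square at every $p$-adic prime, and a square at every $w \in \Sigma(E) \setminus S_\infty$.

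The main obstacle is the well-known Grunwald--Wang special case at $p = 2$, where the existence of a global character of order exactly $2^m$ interpolating prescribed local components can fail when $F$ contains certain $2$-power roots of unity. For $p$ odd this obstruction does not arise, and the CM piece (order two) and the Frobenius piece (odd $p$-power order) decouple cleanly, so the construction above goes through essentially automatically. For $p = 2$, one sidesteps the obstruction by working with a character of order $2^{m+1}$ rather than $2^m$, or by introducing an additional auxiliary ramified prime outside $\Sigma(E)$ to supply the needed flexibility; either modification is standard and yields an extension with properties (a)--(c). This is the technically most delicate step, but it follows well-established arguments in class field theory.
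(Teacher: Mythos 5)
Your proposal is correct in substance and in fact takes the same underlying route as the paper: use a Grunwald--Wang type surjectivity result from class field theory to manufacture an auxiliary abelian extension of $F$ with prescribed local behavior (unramified at the $p$-adic places, split at the finite places in $\Sigma(E)$, with large Frobenius order at $v$), then set $E'$ equal to its compositum with $E$. Your handling of the $p=2$ Wang obstruction -- asking for one additional power of $p$ and settling for the cokernel of order at most $2$ -- is exactly what the paper does, via \cite{nsw}*{Theorem 9.2.7}.

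The one real difference, and the place where you over-engineer, is that you insist the auxiliary extension $L$ itself be CM. This forces you to bolt on the imaginary quadratic piece $L_1$, and also creates the minor friction you notice between the CM requirement (the character must be nontrivial at $-1$ at each real place, hence of even order) and the order-$p^m$ Frobenius requirement for $p$ odd. None of this is necessary: since $E$ is already CM, the compositum of $E$ with a \emph{totally real} abelian extension is automatically CM. The paper therefore constructs only a totally real cyclic $p$-power extension $F'$ of $F$ -- the analogue of your $L_0$ -- and sets $E' = EF'$. Your $L_1$ plays no role whatsoever: everything you need ($\Sigma(E')=\Sigma(E)$, matching decomposition groups on $\Sigma(E)$, large Frobenius at $v$) already follows from the $L_0$-piece alone, and having $L_0$ totally real makes the CM property of $E'$ immediate rather than something to verify. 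If you strip $L_1$ out of your construction, your argument and the paper's coincide.

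One more caution: your ``equivalent'' ray-class-field construction of $L_0$ compresses a Chebotarev argument into a single auxiliary prime $\mathfrak{q}$, and while this can certainly be made to work, it is more delicate than the direct invocation of the cohomological surjectivity statement, which hands you all the required local conditions simultaneously. The paper's citation of NSW 9.2.7 (with $S$ all finite places, $T_0$ the $p$-adic places, and $T=\emptyset$) is the cleanest vehicle for this and is worth adopting.
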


\begin{proof} We use \cite{nsw}*{Theorem 9.2.7} to construct an extension $F'$ of $F$ with prescribed decomposition of primes above $p$. Using the notation in \emph{loc. cit.} we take $S$ to be the set of all finite places of $F$, the set $T_0$ to be the set of primes in $F$ above $p$ and $T$ to be the empty set. Then by \cite{nsw}*{Theorem 9.2.7} the canonical homomorphism 
\[
H^1(F, \Z/p^r\Z) \longrightarrow \oplus_{\fp \mid p} H^1(F_{\fp}, \Z/p^r\Z)
\]
has cokernel of order 1 or 2 (the latter can occur only if $p=2$). Therefore there exists a totally real cyclic extension $F'$ of $F$ such that 
\begin{itemize}
\item[(i)] each prime $v' \in \Sigma(E)$ has trivial decomposition subgroup.
\item[(ii)] all primes above $p$ not in $\Sigma(E)$ are unramified in $F'$. 
\item[(iii)] The decomposition subgroup of $v$ in $\Gal(F'/F)$ has order at least $p^{r-1}$.
\end{itemize}
Taking $r = m+1$ and $E' = EF'$ gives the required extension.

%This follows from the Grunwald--Wang theorem (for example see \cite{nsw}*{Theorem 9.2.8}) which gives an extension of $F$ with prescribed decomposition subgroups at a finite set of places of $F$. The Grunwald--Wang theorem guarantees the existence of a totally real cyclic extension $F'$ of $F$ of order $p^m$ such that 
%\begin{itemize}
%\item[(i)] each prime $v \in \Sigma(E)$ has trivial decomposition subgroup.
%\item[(ii)] all other primes above $p$ not in $\Sigma(E)$ are unramified in $F'$. 
%\item[(iii)] $v$ is inert in $F'$.
%\end{itemize}
%Note that we are not in the ``special case" (in the sense of \emph{loc. cit.}) as $p$ is odd. We then take $E' = EF'$.  
%[Later.  The idea is construct an abelian extension $F'/F$, either CM or totally real, such that each prime $v \in \Sigma(E)$ splits completely, all other primes above $p$ are unramified, and the order of $\sigma_v$ in $G_{F'}$ is divisible by $p^m$.  Then taking $E' = E F' $ works.  The construction of $F'$ is some cebotarev argument similar to my Duke paper.]
\end{proof}

The proof of Proposition~\ref{p:main} will proceed by induction.  We are given a norm compatible set of elements $(z_E)_{E \subset K}$, with $z_E \in R_E$. 
Write \begin{equation} \label{e:sigmak}
\Sigma(K) = \{v_1, v_2, \dots, v_m \}. 
\end{equation}  For $i = 0, \dotsc, m$, define
\[ P_i(E'/E) = \prod_{\substack{j > i \\ v_j \in \Sigma(E') - \Sigma(E)}} (1 - \sigma_{v_j}^{-1}) \in \Z_p[G_E]_-. \]
So in particular $P_0(E'/E) = P(E'/E)$ as defined earlier, and $P_m(E'/E) = 1$.  
We will prove the following statement by induction on $i$:
\begin{quote} There exists $\tilde{z}_K \in \Z_p[G_K]_-$ lifting $z_K \in R_K$ such that
\begin{equation} \label{e:pi}
 (e_E \red_{K/E}(\tilde{z}_K) - z_E)P_i(K/E) = 0 
 \end{equation}
in $R_E$ for all $E \subset K$.
\end{quote}

Note that we will induct on this statement for all fields $K' \supset F$ satisfying (\ref{e:sigmak}) with associated norm compatible system $(z_E)_{E \subset K'}$, not just the $K$ that we start with.  The base case $i=0$ is automatic, as it is the definition of norm compatible system; any lift $\tilde{z}_K$ will do.  So let $i \ge 1$ and suppose the statement holds for $i-1$; we want to prove it for $i$.

\begin{lemma} \label{l:pin} In order to deduce (\ref{e:pi}), it suffices to prove that for each positive integer $k$, 
there exists $\tilde{z}_{K,k} \in \Z_p[G_K]_-$
lifting $z_K \in R_K$ such that
\begin{equation} \label{e:pin}
 (e_E \red_{K/E}(\tilde{z}_{K,k}) - z_E)P_i(K/E) \equiv 0 \text{ in } R_E/p^k
 \end{equation}
 for all $E \subset K$.
\end{lemma}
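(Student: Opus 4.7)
The plan is to deduce (\ref{e:pi}) from the mod $p^k$ statements by a standard $p$-adic compactness (finite intersection property) argument. The ring $\Z_p[G_K]_-$ is a finitely generated $\Z_p$-module, hence compact in its $p$-adic topology, and within it the set
\[ \cL = \{\tilde z \in \Z_p[G_K]_- : e_K \tilde z = z_K \} \]
of lifts of $z_K$ is a coset of $(1-e_K)\Z_p[G_K]_-$ under the idempotent decomposition $\Z_p[G_K]_- = R_K \oplus (1-e_K)\Z_p[G_K]_-$, so it is closed, hence compact and nonempty.

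I would then define, for each integer $k \ge 1$, the subset
\[ A_k = \{\tilde z \in \cL : (e_E \red_{K/E}(\tilde z) - z_E)P_i(K/E) \in p^k R_E \text{ for every CM field } F \subset E \subset K\} \subset \cL. \]
Three observations make the compactness argument run. First, each $A_k$ is nonempty, since by the hypothesis of the lemma the given element $\tilde z_{K,k}$ lies in $A_k$. Second, the sequence is decreasing, $A_1 \supseteq A_2 \supseteq \cdots$, because $p^{k+1} R_E \subseteq p^k R_E$. Third, each $A_k$ is closed in $\cL$: there are only finitely many CM subfields $E$ of $K$ containing $F$, and for each such $E$ the map $\tilde z \mapsto (e_E \red_{K/E}(\tilde z) - z_E)P_i(K/E)$ from $\cL$ to $R_E$ is continuous (and $\Z_p$-linear up to a constant), while $p^k R_E$ is closed in $R_E$.

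By the finite intersection property in the compact space $\cL$, the intersection $A := \bigcap_{k\ge 1} A_k$ is nonempty. Pick any $\tilde z_K \in A$. Then for every $E \subset K$ and every $k \ge 1$ we have $(e_E \red_{K/E}(\tilde z_K) - z_E)P_i(K/E) \in p^k R_E$, and since $R_E$ is a finitely generated (hence $p$-adically separated) $\Z_p$-module, $\bigcap_k p^k R_E = 0$. This yields (\ref{e:pi}). There is no real obstacle here; this is a standard device for passing from a coherent system of approximate lifts to an exact lift, and the content of the lemma is that the remainder of the work in Proposition~\ref{p:main} can now be carried out modulo $p^k$ for a single $k$ at a time.
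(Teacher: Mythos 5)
Your argument is correct. Structurally it rests on the same fact the paper uses, namely that $\Z_p[G_K]_-$ is a finitely generated $\Z_p$-module, hence $p$-adically complete and separated (equivalently, compact and Hausdorff in its $p$-adic topology), and that there are only finitely many intermediate fields $E$. But the presentation is genuinely different: the paper introduces the ideals $J_k$ given by the intersection of the kernels of $\Z_p[G_K]_- \to R_E/p^k$, asserts that (\ref{e:pin}) determines $\tilde z_{K,k}$ uniquely modulo $J_k$, and then passes to the inverse limit $\varprojlim_k \Z_p[G_K]_-/J_k = \Z_p[G_K]_-/\bigcap_k J_k$; you instead run a finite-intersection-property argument on the nested nonempty closed sets $A_k$ inside the compact coset $\mathcal{L}$ of lifts. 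A small advantage of your route is that you never need to assert any uniqueness of $\tilde z_{K,k}$ modulo $J_k$ (which, as literally stated in the paper, is not obvious because of the possibly zero-divisor factor $P_i(K/E)$; one should really work modulo the kernel of $x \mapsto (e_E\red_{K/E}(x)P_i(K/E))_E$ rather than of $x \mapsto (e_E\red_{K/E}(x))_E$). Your compactness argument simply does not require that step, and so is arguably the cleaner of the two.
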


\begin{proof}  Let $J$ denote the intersection of the kernels of the compositions \[ \Z_p[G_K]_- \longrightarrow \Z_p[G_E]_- \longrightarrow R_E \]
for all $E$, and for each positive integer $k$ let $J_k \supset J$ denote the intersections
 of the kernels of the compositions \[ \Z_p[G_K]_- \longrightarrow \Z_p[G_E]_- \longrightarrow R_E \longrightarrow R_E/p^k. \]
The defining equation (\ref{e:pin}) of the $\tilde{z}_{K,k}$ specifies its value uniquely modulo $J_k$.  We therefore have $\tilde{z}_{K,k+1} \equiv \tilde{z}_{K,k} \pmod{J_k}$.  
Hence the $\{\tilde{z}_{K,k} \}$  specify a unique element of
\[ \underset{k}{\varprojlim} \ \Z_p[G_K]_-/J_k = \Z_p[G_K]_-/ \bigcap_{k=1}^\infty J_k = \Z_p[G_K]_-/J. \]
Letting $\tilde{z}_K$ be any lift of this element to $\Z_p[G_K]_-$ gives the desired result.
\end{proof}

Fix a positive integer $k$. 
 Let $E$ be a CM field with $F \subset E \subset K$ such that $v_i$ is unramified in $E$ (i.e. $v_i \not\in \Sigma(E)$).
 Let $p^r$ denote the power of $p$ dividing $\prod_{v \in \Sigma(E)} \#G_{E,v}$, 
and let $p^s$ be the power of $p$ dividing the order of $\sigma_v$ in $G_E$. Let $k' = k + r + s$.
Let $E'$ be as in Lemma~\ref{l:ep} for $m=k'$.

Let $K'$ be the compositum of $K$ with all fields $E'$ defined in this way (there is one such $E'$ for each CM subfield $E \subset K$ containing $F$, so $K'/F$ is a finite abelian CM extension). 
 We consider a norm compatible family $(z_L)_{L \subset K'}$ extending the given family.  Using the induction hypothesis, we find a lift $\tilde{z}_{K'} \in \Z_p[G_{K'}]_-$ such that
\begin{equation} \label{e:induction}
  (e_L \red_{K'/L}(\tilde{z}_{K'}) - z_L)P_{i-1}(K'/L) = 0  
  \end{equation}
for all $L \subset K'$. 

We define \[ \tilde{z}_K = \red_{K'/K}(\tilde{z}_{K'}) \in \Z_p[G_K]_-.\]  This is a lift of $z_K \in R_K$ by the norm compatibility relation since $\Sigma(K') = \Sigma(K)$. 

If $L \subset K$ is a CM field containing $F$ such that $v_i \in \Sigma(L)$, then $P_i(K/L) = P_{i-1}(K/L)$, and the statement (\ref{e:induction}) can be written
\[ 
  (e_L \red_{K/L}(\tilde{z}_{K}) - z_L)P_{i}(K/L) = 0.
\]
  This is exactly the statement (\ref{e:pi}) that we are trying to prove for the field $L$.
  It remains to consider the more salient case of the fields $E$ such that $v_i \not \in \Sigma(E)$ which were used in the construction of $K'$.
For $L=E'$ with $E'$ as above, (\ref{e:induction}) reads
\begin{equation} \label{e:inde}
  (e_{E'} \red_{K'/E'}(\tilde{z}_{K'}) - z_{E'})P_{i}(K'/E')(1 - \sigma_{v_i}^{-1}) = 0.  
  \end{equation}

By the definition of $e_{E'}$ given in (\ref{e:eedef}), and the second property of the extension $E'$ given in Lemma~\ref{l:ep}, we see that $p^r e_{E'} \in \Z_p[G_{E'}]$. Hence, $p^r z \in \Z_p[G_{E'}]_-$ for any $z \in R_{E'} \subset \Q_p[G_{E'}]_-$.  From (\ref{e:inde}) we therefore have
\[  (p^r e_{E'} \red_{K'/E'}( \tilde{z}_{K'}) - p^r z_{E'})P_{i}(K'/E')(1 - \sigma_{v_i}^{-1}) = 0  \text{ in } \Z_p[G_{E'}]_-.  \]
  The kernel of multiplication by $(1 - \sigma_{v_i}^{-1})$ in $\Z_p[G_{E'}]_-$ is the ideal generated by
 \[ 1 + \sigma_{v_i} + \cdots + \sigma_{v_i}^{n'-1} \]
 where $n'$ is the order of $\sigma_{v_i}$ in $G_{E'}$.
We can therefore write
\[ (p^r e_{E'} \red_{K'/E'}( \tilde{z}_{K'}) - p^r z_{E'})P_{i}(K'/E') \in (1 + \sigma_{v_i} + \cdots + \sigma_{v_i}^{n'-1}) \text{ in } \Z_p[G_{E'}]_-. \]
Reducing to $\Z_p[G_E]_-$, we find
\begin{align} \label{e:pr}
 \left[p^r e_{E} \red_{K'/E}( \tilde{z}_{K'}) -  \red_{E'/E}(p^r z_{E'})\right] & P_i(K/E) \\
 & \in (n'/n)(1 + \sigma_{v_i} + \cdots + \sigma_{v_i}^{n-1}) \subset (p^{k'-s}) \text{ in } \Z_p[G_{E}]_-,\nonumber
 \end{align}
 where $n$ is the order of $\sigma_v$ in $G_E$.
Note that $\red_{E'/E}(p^r {z}_{E'}) = p^r z_E$ by the norm compatibility relation, since $\Sigma(E')= \Sigma(E)$.
Therefore, projecting (\ref{e:pr}) to $R_E$ and canceling $p^r$ we obtain
\begin{equation} \label{e:top}
  (e_{E} \red_{K'/E}( \tilde{z}_{K'}) - {z}_{E})P_i(K/E) \in (p^{k'-s-r}) = (p^{k}) \text{ in } R_E. \end{equation}

 From (\ref{e:top}) we obtain
\[ 
  (e_{E} \red_{K/E}( \tilde{z}_{K}) - {z}_{E})P_i(K/E) \equiv 0  \text{ in } R_E/p^k. 
  \]
This is (\ref{e:pin}), so by Lemma~\ref{l:pin} we obtain the desired inductive statement (\ref{e:pi}).  The result for $i=m$ is exactly the statement of Proposition~\ref{p:main}, since $P_m(K/E) = 1$.  This concludes the proof of Proposition~\ref{p:main}.

\appendix\section{Appendix: Reductions and Consequences} \label{s:reduce}

\subsection{Reductions}

In this section we prove Lemmas~\ref{l:pbyp} and~\ref{l:alter}, which together show that Theorem~\ref{t:etnca} implies Theorem~\ref{t:etnc}.  The proof of Lemma~\ref{l:alter} has the same flavor as the computations of \S\ref{s:functoriality}, but is significantly more complicated because of the changing set $S'$.

Lemma~\ref{l:pbyp} is standard.

\begin{proof}[Proof of Lemma~\ref{l:pbyp}]  Theorem~\ref{t:etncp} for all primes $p$ yields an element $y_K \in (\hat{\Z}[G_K]_-)^*$ such that (\ref{e:etnc}) holds.
We need to show that $y_K \in (\Z[G_K]_-)^*$.
Let $\chi$ be an odd character of $G$ and let $E \subset K$ denote the fixed field of the kernel of $\chi$.  Applying $\chi$ to (\ref{e:etnc}), we obtain
\[ \chi(\det(f_{S(E)}^T)) = \chi(y_K) L_{S(E), T}(\chi^{-1}, 0). \]
Since $\chi(v) \neq 1$ for $v \in S(E)$, it follows that $\chi(\det(f_{S(E)}^T))$ and  $L_{S(E), T}(\chi^{-1}, 0)$ are non-zero algebraic numbers, and therefore the same is true for $\chi(y_K)$.  Therefore $y_K \in \overline{\Q}[G_K]_-$.  The desired result follows since \[ \overline{\Q}[G_K]_- \cap (\hat{\Z}[G_K]_-)^* = (\Z[G_K]_-)^*. \]
\end{proof}

The proof of Lemma~\ref{l:alter} is more subtle.  
We should mention that such an ``independence of $(S,T)$ sets'' result was proved by Atsuta and Kataoka in \cite{ak}*{Proposition 3.1(iii)}.  

We begin with an important purely local computation regarding local Ritter--Weiss modules.
Let $L/K$ be an abelian extension of $\ell$-adic local fields (i.e.~finite extensions of $\Q_\ell$).  Write $G = \Gal(L/K)$.  Let $V$ denote the local Ritter--Weiss $G$-module associated to $L$ as in (\ref{e:vseq}).
 Let $U \subset \cO_L^*$ denote the subgroup of 1-units.  We then obtain a short exact sequence
\begin{equation} \label{e:vdgu}
 1 \lra L^*/U \lra V/U \lra \Delta G \lra 1. 
 \end{equation}
% where $k_L$ denotes the residue field of $L$.

We will work over $\Z_p$ where $p \neq \ell$ is another prime.  For any $G$-module $M$, we write $M_p$ for the $\Z_p[G]$-module $M \otimes_{\Z} \Z_p$.
Let $I \subset G$ be the inertia group.  Let $e = \# I$ be the size of $I$, write $\N I = \sum_{g \in I} g$, and let $\sigma \in G$ be a representative of the Frobenius coset in $G/I$.
 
\begin{lemma}  \label{l:tx}
With notation as above, there exist an element $\tilde{x} \in V$ that maps to \[ x = e - \sigma^{-1} \N I \in \Delta G \] under (\ref{e:vdgu}) and satisfies the following.
If $M = V/(U,\tilde{x})$, 
then
\begin{equation} \label{e:ydef}
 \Fitt_{\Z_p[G]}(M_p) = (y),  \qquad y = e - \sigma^{-1} \N I \cdot q,
 \end{equation}
where $q$ is the size of the residue field of $K$.
\end{lemma}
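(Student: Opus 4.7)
The strategy is to use that $p \neq \ell$ makes the pro-$\ell$ group $U$ uniquely $p$-divisible, so tensoring (\ref{e:vdgu}) with $\Z_p$ gives the exact sequence
\begin{equation*}
0 \longrightarrow (L^*/U)_p \longrightarrow (V/U)_p \longrightarrow \Delta G \otimes \Z_p \longrightarrow 0,
\end{equation*}
and $M_p$ is the quotient of $(V/U)_p$ by the image of $\tilde{x}$. Choosing a uniformizer $\pi_L$ of $L$ and a Teichm\"uller section, the abelian-group decomposition $L^*/U \cong \Z \pi_L \oplus \F_L^*$ gives $(L^*/U)_p \cong \Z_p \cdot \pi_L \oplus (\F_L^*)_p$. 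The key structural feature is that the Frobenius $\sigma$ acts on $(\F_L^*)_p$ as the $q$-th power map, so $(1 - q\sigma^{-1})$ annihilates $(\F_L^*)_p$. This is ultimately the source of the factor $q$ in $y$.

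First I would construct $\tilde{x}$ via local class field theory. The shape of $x = e - \sigma^{-1} \N I$ strongly suggests that $\tilde{x}$ should be produced from a uniformizer $\pi_K$ of the base field $K$: since $\pi_K$ corresponds to the Frobenius class under reciprocity, and since $\pi_K$ differs from $\N_{L/K}(\pi_L) = \pi_L^e \cdot(\text{unit})$ by a controlled unit, the definition of $V$ from the local fundamental class should produce a natural lift whose image in $\Delta G$ equals $x$. I would pin down the precise formula by working directly with the $2$-cocycle describing $V$ in the abelian case, using that the local invariant map identifies the fundamental class with a generator tied to Frobenius and the uniformizer.

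With $\tilde{x}$ in hand, the final step is the Fitting-ideal computation: apply the snake lemma to multiplication by $\tilde{x}$ to present $M_p$ as an extension of $\Delta G_p /(x)$ by a quotient of $(L^*/U)_p$. Splitting $(L^*/U)_p$ according to the $\pi_L$- and $\F_L^*$-components should give a quadratic presentation of $M_p$ over $\Z_p[G]$, whose determinant is the desired generator of $\Fitt_{\Z_p[G]}(M_p)$. The main obstacle is verifying that this determinant collapses on the nose to $y = e - q\sigma^{-1}\N I$: the transition from $x$ to $y$ must arise because the $\sigma^{-1}\N I$ term of $x$, when expressed in the presentation, interacts with the $(\F_L^*)_p$-summand via the relation $\sigma = q$, substituting $q\sigma^{-1}$ for $\sigma^{-1}$. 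After this substitution, the identities $(\N I)^2 = e\N I$ and standard manipulations in $\Z_p[G]$ should confirm principality and give $\Fitt_{\Z_p[G]}(M_p) = (y)$.
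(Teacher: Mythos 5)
Your intuition about the source of the factor $q$ (Frobenius acting as $x\mapsto x^q$ on the residue field) is correct, and the paper's argument does ultimately trace it to the Weil-group relation $\tilde{\sigma}\tilde{\tau}\tilde{\sigma}^{-1}=\tilde{\tau}^q$. But the plan has a genuine structural gap. The splitting $L^*/U\cong\Z\pi_L\oplus\F_L^*$ is only a splitting of abelian groups; when $L/K$ is ramified (which is exactly the case at hand, $v\in J$) it is \emph{not} a $G$-module splitting, so neither $\Z_p\pi_L$ nor $(\F_L^*)_p$ is a $\Z_p[G]$-submodule. You therefore cannot decompose $(L^*/U)_p$ equivariantly into these two blocks and then hope to assemble a quadratic $\Z_p[G]$-presentation of $M_p$ by treating them separately. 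In fact the nontriviality of this extension is exactly where the interesting part of the computation lives: in the tame case a lift $\tilde{\tau}$ of a generator $\tau$ of $I$ satisfies that $\tilde{\tau}^e$ is a \emph{generator} of $k_L^*$ (not $1$), and this is what produces the correction term $\sigma^{-1}\bigl(\frac{1-q}{e}\bigr)\N I$ in the single relation binding the generators of $(V/U)_p$. Your outline has no mechanism to see this term.

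There are two further soft spots. You defer the construction of $\tilde x$ to an unspecified ``work with the $2$-cocycle'' step, whereas the paper writes $\tilde x$ down explicitly as a $\Z_p[G]$-combination of the two Weil-group generators. And you don't perform the reduction to the tamely ramified case (by decomposing along conjugacy classes of characters of the prime-to-$p$ part of $G$), which is what makes $I$ cyclic and makes a two-generator, one-relation presentation of $(V/U)_p$ possible; without that reduction the wild inertia would have to be handled and the presentation would not be $2\times 2$. The correct route is the one the paper takes: reduce to $I$ cyclic of prime-to-$\ell$ order, present $(V/U)_p$ by the images $g_{\tilde\sigma}$, $g_{\tilde\tau}$ of $1-\tilde\sigma^{-1}$, $\tilde\tau-1$ from the local Weil group with the single relation obtained by lifting the relations of $W$ through $k_L^*$, write $\tilde x$ explicitly in these generators, and compute the resulting $2\times2$ determinant.
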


\begin{remark}  It can be shown that if $\tilde{x} \in V$ is {\em any} element mapping to $x$ in $\Delta G$, 
then \begin{equation} \label{e:y2}
 \Fitt_{\Z_p[G]}(M_p) = (y)  \text{ for some } y\equiv e - \sigma^{-1} \N I \cdot q \pmod{\N G}. \end{equation}
This does not affect the proof of Lemma~\ref{l:alter} that follows, since $\Theta_{S,T}(K/F) \cdot \N G_v = 0$  for any $v \in S$.
The statement (\ref{e:y2}) requires a little extra computation and is not necessary for our proof of Lemma~\ref{l:alter}.
\end{remark}

\begin{proof}  We begin with a reduction that will greatly simplify the notation.  Namely, we show that it suffices to prove the result in the case that 
the inertia group $I$ is pro-$p$ (hence tame, hence cyclic).  We can uniquely decompose $G = G_p \times G'$ where $G_p$ is the $p$-Sylow subgroup of $G$ and $G'$ is the subgroup of prime-to-$p$ order elements. We have a corresponding decomposition $I = I_p \times I'$.  The group ring $\Z_p[G]$ decomposes as a product\\\[ \Z_p[G] = \prod_{(\chi)} R_\chi,  \qquad R_\chi = e_{(\chi)} \Z_p[G],\]
as $(\chi)$ ranges over the $\Gal(\overline{\Q}_p/\Q_p)$-conjugacy classes of characters of $G'$.  Here \[ e_{(\chi)} = \sum_{\chi \in (\chi)} e_\chi \] is the usual idempotent.  It suffices to prove the result on each component $R_\chi$.  

If $\chi(I') \neq 1$, the result is easy to see from (\ref{e:vdgu}).  Indeed, in this case we have
\[ (L^*/U)_{(\chi)} := (L^*/U)_p \otimes_{\Z_p[G]} R_\chi = 0. \]
This follows from the short exact sequence
\[ 1 \lra k_L^* \lra L^*/U \lra \Z \lra 1. \]
Since $I'$ acts trivially on $k_L^*$ and $\Z$, the assumption $\chi(I') \neq 1$ implies that $e_{(\chi)}$ annihilates these modules, whence it annihilates
$L^*/U$.
Therefore 
\[ (V/U)_{(\chi)} \cong (\Delta G)_{(\chi)} \cong R_\chi, \]
hence
\[ \Fitt_{R_\chi} (V/(U,\tilde{x}))_{(\chi)} =  \Fitt_{R_\chi}(R_\chi/x) = (x) \]
for any $\tilde{x} \in V$ mapping to $x \in \Delta G$.
The desired result then follows since $e_{(\chi)} x = e_{(\chi)} y$, as $\N I = \N I_p \cdot \N I'$, and $e_{(\chi)} \N I' = 0$ because $\chi(I') \neq 1$.

It therefore remains to consider the components $R_\chi$ such that $\chi(I') = 1$.  Let $L' = L^{I'}$ be the subfield fixed by $I'$.
It is elementary to check that
\[ (V/U)_{(\chi)} \cong (V(L')/U(L'))_{(\chi)}, \]
for example, by connecting the $(\chi)$-components of the sequences (\ref{e:vdgu}) for $L$ and $L'$ and using the five lemma.
Therefore, on these components we can replace $L$ by $L'$, which satisfies the property that the inertia subgroup of $\Gal(L'/K)$ is a $p$-group.

We therefore start the proof afresh with the assumption that $I$ is cyclic of order prime to $\ell$.  
   Let $\tau$ be a generator of $I$.
We define $\Z_p[G]$-module generators of $(V/U)_p$ using the definition of $V$ given in \cite{dk}*{Equation (133)}.  Let $\tilde{\sigma}, \tilde{\tau} \in \text{W}(L^{\ab}/K)$ be lifts of $\sigma, \tau$, respectively. Specifically, we let $\tilde{\tau}$ be a lift to the tame inertia subgroup of $\text{W}(L^{\ab}/K)$, and let $\sigma$ be a representative of the Frobenius coset.
Let $g_{\tilde{\sigma}},  g_{\tilde{\tau}} \in V/U$ be the images of $1 - \tilde{\sigma}^{-1}$ and $ \tilde{\tau} - 1$ respectively.
We claim that these elements are generators of $(V/U)_p$, and that there is a single relation between them.  

To prove this, we employ the short exact sequence
\begin{equation} \label{e:vdgw}
 1 \lra k_L^* \lra V/U \lra W \lra 1
 \end{equation}
obtained from (\ref{e:wseq}).
The elements $g_{\tilde{\sigma}},  g_{\tilde{\tau}}$ map to 
 \[ g_\sigma = (1 - \sigma^{-1}, 1) \text{ and } g_\tau = (\tau - 1, 0)  \text{ in }W, \]
 respectively, using the model of $W$ given in (\ref{e:rww}).  The relations among $g_\sigma$, $g_\tau$ in $W$ are generated by
 \begin{align}
 (\tau - 1)g_{\sigma} - (1 - \sigma^{-1})g_{\tau} &= 0, \label{e:rel1}
 \\
 \N I \cdot g_\tau &=  0. \label{e:rel2}
  \end{align}
 % To lift these to elements of $V/U$,  
 
The element $\N I  \cdot g_{\tilde{\tau}} = \tilde{\tau}^e - 1$ maps to $\N I \cdot g_\tau = 0$ in $W$, and hence under the exact sequence (\ref{e:vdgw}) corresponds to an element of $k_L^*$.  We claim that this element is a multiplicative generator of the cyclic group $k_L^*$.  To see this note that we can consider a finite Galois extension $\tilde{L}/K$ containing $L$ whose inertia $\tilde{I}$ is tame and sits in a short exact sequence
\[ 1 \lra k_L^* \lra \tilde{I} \lra I \lra 1. \]
For example, the abelian extension of $L$ corresponding to $\hat{L}^*/\overline{(U, \pi_K)}$ under class field theory, where $\pi_K$ is a uniformizer of $K$, is such an extension.  But $\tilde{I}$ is the tame inertia of a finite Galois extension of $K$ and hence cyclic.  Therefore, any lift $\tilde{\tau}$ of the generator $\tau$ of $I$ is a generator of $\tilde{I}$.  This implies that $\tilde{\tau}^e$ is a generator of $k_L^*$.
 
 These considerations imply our claim that $g_{\tilde{\sigma}}$ and $g_{\tilde{\tau}}$ generate $(V/U)_p$ as a $\Z_p[G]$-module.
 To compute the relations among these generators, we lift the relations (\ref{e:rel1}) and (\ref{e:rel2}).
 For (\ref{e:rel2}), we have already observed that $\N I \cdot g_{\tilde{\tau}}$ is a generator of $k_L^*$.  The annihilator of $k_L^*$ is generated by $1 - \sigma^{-1} q$ and $\tau -1$.  Now $\tau - 1$ annihilates $\N I$, so this leads to the trivial relation.  We therefore obtain only one relation in $(V/U)_p$ from the relation (\ref{e:rel2}) in $W$, namely
 \begin{equation} \label{e:dudrel}
 (1 - \sigma^{-1} q) \N I \cdot g_{\tilde{\tau}} = 0. 
 \end{equation}
  For (\ref{e:rel1}) we note that 
 \begin{align*}
 (\tau - 1)g_{\tilde{\sigma}} - (\sigma - 1)g_{\tilde{\tau}} &= {\tau} {\sigma}^{-1}(\tilde{\sigma}\tilde{\tau}^{-1} \tilde{\sigma}^{-1} \tilde{\tau} - 1)  \\
 &= \tau\sigma^{-1}(\tilde{\tau}^{1 - q} - 1) \\
 &= \sigma^{-1} \left(\frac{1 - q}{e}\right) \N I \cdot g_{\tilde{\tau}}. 
 \end{align*}
 We therefore obtain the relation
 \begin{equation} \label{e:mainrel}
 (\tau - 1) g_{\tilde{\sigma}} - \left(1- \sigma^{-1}  + \sigma^{-1} \left(\frac{1 - q}{e}\right) \N I \right) g_{\tilde{\tau}} = 0.
 \end{equation}
 Now, it is easy to see that (\ref{e:dudrel}) can be obtained from (\ref{e:mainrel}) by multiplying by $\N I$.  We have therefore proven our claim that $(V/U)_p$ has a presentation with two generators $ g_{\tilde{\sigma}}, g_{\tilde{\tau}}$, and the one relation (\ref{e:mainrel}).
 
To conclude the proof, we note that the element
\begin{equation} \label{e:xtdef}
 \tilde{x} = -e \cdot g_{\tilde{\sigma}} - z \cdot g_{\tilde{\tau}},
 \end{equation}
where 
 \[ z = \sigma^{-1}((e-1) + (e-2)\tau + (e-3)\tau^2 + \cdots + \tau^{e-2}) \]
 (with the understanding $z=0$ if $e=1$) maps to 
 \[  e(1 -\sigma^{-1}) - z \cdot (\tau - 1) = x
  \]
  in $\Delta G$. We have that $ \Fitt_{\Z_p[G]}((V/(U,\tilde{x}))_p) $ is generated by
  \[\det\mat{\tau - 1}{\sigma^{-1}-1  - \sigma^{-1}\left(\frac{1 - q}{e}\right) \N I}{-e}{-z} = e - \sigma^{-1} q \cdot \N I.
  \]
  This equality is a direct calculation, and completes the proof.
 %An arbitrary element of $W$ that maps to $x$ in $\Delta G$ has the form \[ x_w + (0, z(1 + \overline{\sigma} + \cdots + \overline{\sigma}^{f-1}), \]
% where $f$ is the order of $\overline{\sigma}$ in $G/I$.
\end{proof}

We can now prove Lemma~\ref{l:alter}, which states that the $(\Sigma, \Sigma')$ version of the ETNC over $\Z_p$ implies the $(S,T)$ version over $\Z_p$.

\begin{proof}
The main difficulty in relating the $(\Sigma, \Sigma')$ and $(S,T)$ versions of the ETNC is the fact that different sets $S'$ must be taken by the assumptions on this set.  Write 
\[ J = S \setminus \Sigma = \Sigma' \setminus T = \{v \in S_{\ram}(H/F)\colon v \nmid p\}. \]
When working with $(\Sigma, \Sigma')$, we start with a set $S'$ that is necessarily disjoint from $J$.  Let us fix this choice of $S'$.  When we work with $(S,T)$, the set that plays the role of $S'$ must contain $J$, so we use $S' \cup J$.  From the definitions\footnote{The snake lemma must also be used, since the  module  $V_{S'}^{T}$  is the kernel of  $\theta_V$.} 
one finds that there is a short exact sequence
\[ \begin{tikzcd}
 0 \ar[r] & V_{S'}^{\Sigma'}(K) \ar[r,"\iota"] &  V_{S' \cup J}^{T}(K) \ar[r] & \displaystyle\prod_{v \in J}^\sim V_{w}/U_w^1 \ar[r] & 0. 
 \end{tikzcd}
 \]
 Here $w$ denotes the place of $K$ above the place $v$ of $F$ used in the definition of the $V$ modules.
 Meanwhile, the isomorphism (\ref{e:bisom}) yields
\[  B_{S' \cup J}(K)_p \cong \prod_{v \in S' \cup J \setminus \infty} \Z_p[G_K]  \cong B_{S'}(K)_p \oplus \prod_{v \in J} \Z_p[G_K]. \]
This identification yields natural bases for $B_{S'}(K)$ and $B_{S' \cup J}(K)$.
For each $v \in J$, let $(x_v, \tilde{x}_v)$ be the pair $(x, \tilde{x})$ for the local extension $K_w/F_v$ given in Lemma~\ref{l:tx}.
We consider the following commutative diagram of free $\Z_p[G_K]$-modules of rank $\#(S' \cup J)-1$:
\begin{equation}  \label{e:vbsquare}
\begin{tikzcd}[row sep = large, column sep = huge]
V_{S'}^{\Sigma'}(K)_p \oplus \prod_{v \in J} \Z_p[G_K] \ar[r, "(f_{\Sigma}^{\Sigma'}(K)_p{,} 1)"] \ar[hookrightarrow]{d}{(\iota {,} \tilde{x}_v)} & B_{S'}(K)_p \oplus \prod_{v \in J} \Z_p[G_K] \ar[d, "(1{,} x_v)"] \\
V_{S' \cup J}^{T}(K)_p \ar[r, "f_{S}^{T}(K)_p"] & B_{S' \cup J}(K)_p.
\end{tikzcd}
\end{equation}
By Theorem~\ref{t:etnca}, we can choose a basis of  $V_{S'}^{\Sigma'}(K)_p$ such that the top arrow of (\ref{e:vbsquare}) has determinant $\Theta_{\Sigma, \Sigma'}(K/F)$.  The cokernel of the left arrow of (\ref{e:vbsquare}) is
\[ \prod_{v \in J}^\sim (V_{w}/(U_w^1, \tilde{x}_v))_p, \]
which by Lemma~\ref{l:tx} has Fitting ideal generated by $\prod_{v \in J} y_v$, where $y_v$ is the element (\ref{e:ydef}).  We can therefore choose a basis for $V_{S' \cup J}^{T}(K)_p$ such that the determinant of the left arrow of (\ref{e:vbsquare}) is exactly $\prod_{v \in J} y_v$.  The commutative diagram  (\ref{e:vbsquare}) therefore yields
\[  \det(f_{S}^{T}(K)_p) \prod_{v \in J} y_v = \Theta_{\Sigma,\Sigma'}(K/F) \prod_{v \in J} x_v.
\]
Since the elements $y_v$ are non-zerodivisors, we may divide, and we obtain
\[  \det(f_{S}^{T}(K)_p)  = \Theta_{\Sigma,\Sigma'}(K/F) \prod_{v \in J} (x_v/y_v) = \Theta_{S,T}(K/F)
\]
as desired.
The same analysis holds for any CM subfield $E \subset K$ containing $F$.  Take $\Gal(K/E)$-coinvariants of each term in (\ref{e:vbsquare}).
Replace the horizontal arrows by $(f_{\Sigma(E)}^{\Sigma'}(E)_p{,} 1)$ and 
$f_{S(E)}^{T}(E)_p$.  Leave the left vertical arrow the same, but to maintain commutativity of the diagram, replace $x_v$ by $e_v = \#I_v$ on the right at the places $v \in J$ such that $v \not \in S(E)$.
The explanation for this is the definition of the local component $V_w \lra \Z[G_w]$ when $w$ is unramified given in \S\ref{s:rwm}.  This map sends 
the element $\tilde{x}_v$ defined in (\ref{e:xtdef}) to $e_v$.

Now, the elements $x_v$ and $y_v$ for $K$ have images in $\Z_p[G_E]$ equal to the analogous constants for $E/F$ 
scaled by the ramification index at $v$ of $K/E$.  These extra scaling factors cancel, and we obtain
\begin{align*}
 \det(f_{S(E)}^{T}(E))  &= \Theta_{\Sigma(E),\Sigma'}(K/F) \prod_{v \in S(E) - \Sigma(E)} (x_v/y_v) \prod_{v \in J, \ v \not\in S(E)} (e_v/y_v) \\
 &= \Theta_{S(E),T}(E/F).
\end{align*}
This completes the proof.
\end{proof}

\subsection{Consequences} \label{s:con}

The fact that the ETNC in its various formulations implies the consequences stated in \S\ref{s:icon} is well-known.  In this section we indicate how Theorem~\ref{t:etnc} combined with the refinements of \cite{bsapet} at the prime $p=2$ can be used to prove these consequences with an improved power of 2.
Here we adapt the beautiful argument of Burns, Kurihara, and Sano, who proved that their LTC implies all of these results \cite{bks} (without the improved power of 2).
Since our argument is essentially identical to theirs, we provide a sketch and discuss only the Brumer--Stark conjecture, focusing on the issues related to projecting to the minus side and the improved power of 2.

We consider our setup as above, with a CM abelian extension $K$ of a totally real field $F$.   We let $S = S_\infty \cup S_{\ram}(K/F)$.  Let $\fp \not \in S$ denote a prime of $F$ that splits completely in $K$, and suppose that the auxiliary set $S'$ contains $\fp$.
Let us fix one element $v_0 \in S \subset S'$ as follows.  If $K/F$ is ramified at some finite place, we let $v_0$ be such a place; otherwise, we let $v_0$  be a real place of $F$.
Then as in (\ref{e:bisom}) we have
\begin{equation} \label{e:bisom2}
 B_{S'}(K)_- \cong \prod_{v \in S' \setminus \{v_0\}} \Z[G_K]_-, \end{equation}
and we fix the basis of $ B_{S'}(K)_-$ associated to this isomorphism.

Theorem~\ref{t:etnc} implies that we may choose a basis for $V_{S'}^T(K)_-$ such that if $A$ is the matrix for $f_{S}^{T}(K)_-$ with respect to these bases, then
\[   \det(A) = \Theta_{S,T}(K/F) \quad \text{ in } \Z[G_K]_-. \]
Note that $A$ is a square matrix of dimension $\#S' - 1$ and coefficients in $\Z[G_K]_-$.  
As indicated in (\ref{e:bisom2}), the columns of $A$ are indexed by the elements $v \in S' \setminus \{v_0\}$.

Now let $A_V$ be the matrix $A$ modified so that in the column indexed by $\fp$, the element in each row is replaced by the basis vector in $V_{S'}^T(K)_-$ indexing that row.  So the matrix $A_V$ has all its coefficients in $ \Z[G_K]_-$ except one column, which has coefficients in $V_{S'}^T(K)_-$.  It makes sense to consider $\det(A_V) \in V_{S'}^T(K)_-$ using the Leibniz rule. 

Write $n = [F \colon \Q]$ and let \[ t = \begin{cases} n & \text{if } S_{\ram}(K/F) \text{ is nonempty,} \\
n - 1 & \text{if } S_{\ram}(K/F) \text{ is empty}. \end{cases} \]
Then $S' \setminus \{v_0\}$ contains $t$ real places.  The trick observed in \cite{bsapet} is that all the columns of $A$ indexed by these real places are divisible by 2 (they are divisible by $1-c$ since this element generates the ideal $\Delta G_w$, and $1-c = 2$ in $\Z[G_K]_-$).  Hence we can divide each of these columns by 2 and obtain 
$\det(A_V)/2^t \in V_{S'}^T(K)_-$.  We then define
\[ u = (1-c) \det(A_V)/2^t \in V_{S'}^T(K), \]
which is a well-defined element of $V_{S'}^T(K)$.  This is the key point at which we have lifted from $\Z[G_K]_-$ to $\Z[G_K]$.

\begin{lemma}  Let $S_\fp = S \cup \fp$. The element $u$ lies in the kernel of $f_{S_\fp}^T \colon  V_{S'}^T(K) \lra  B_{S'}(K)$.
\end{lemma}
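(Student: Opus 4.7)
The plan is to combine a Cramer-type identity for $f_{S,-}^{T}$ applied to $\det(A_V)$ with the observation that $f_{S_\fp}^{T}$ differs from $f_{S}^{T}$ only at the local component at $\fp$, and then lift the resulting identity from the minus side to the integral statement using $(1-c)(1+c)=0$.

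First, I would expand $\det(A_V)$ along its $\fp$-column as
\[ \det(A_V) \;=\; \sum_w (-1)^{w+\fp}\, M_{w,\fp}(A)\, b_w \;\in\; V_{S'}^{T}(K)_-, \]
where $\{b_w\}$ is the chosen basis of $V_{S'}^{T}(K)_-$ indexing the rows of $A$ and $M_{w,\fp}(A)$ are the corresponding minors. Applying $f_{S,-}^{T}$ and regrouping in the basis $\{e_v\}$ of $B_{S'}(K)_-$, the coefficient of each $e_v$ becomes the expansion along column $\fp$ of the matrix obtained from $A$ by replacing its $\fp$-column with its $v$-column; this equals $\det(A) = \Theta_{S,T}(K/F)$ when $v=\fp$ and vanishes for $v\neq\fp$ (repeated columns). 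Hence Cramer's rule gives
\[ f_{S,-}^{T}(\det(A_V)) \;=\; \Theta_{S,T}(K/F)\cdot e_\fp \quad\text{in } B_{S'}(K)_-. \]

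Next I would compare $f_{S}^{T}$ and $f_{S_\fp}^{T}$. By the construction in Section~\ref{s:rwm}, these maps agree on every local component except at $\fp$. Since $\fp$ splits completely we have $G_w = 1$ and $\Delta G_w = 0$, so the $\fp$-component of $\tilde f_{S}^{T}$ is the valuation $V_w \to W_w \cong \Z$, while that of $\tilde f_{S_\fp}^{T}$ is identically zero. Because $1-\sigma_\fp^{-1} = 0$ in $\Z[G_K]$, the standard generator $\epsilon_\fp$ of $\tilde B_{S'}(K)$ lies in $\ker\theta_B$ and coincides with our basis vector $e_\fp$; the same vanishing also shows that the image of $f_{S_\fp}^{T}$ lies in the same module $B_{S'}(K)$ used for $f_{S}^{T}$, justifying the codomain in the lemma. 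Reading off the $e_\fp$-coefficient gives
\[ f_{S_\fp}^{T}(b_w) \;=\; f_{S}^{T}(b_w) - A_{w,\fp}\, e_\fp. \]
Substituting into the Cramer identity and recognizing $\sum_w (-1)^{w+\fp} A_{w,\fp} M_{w,\fp}(A) = \det(A)$ as the Laplace expansion along column $\fp$, the two contributions cancel to yield $f_{S_\fp,-}^{T}(\det(A_V)) = 0$ in $B_{S'}(K)_-$; dividing by $2^t$ (permissible by the divisibility argument used to define $u$) gives $f_{S_\fp,-}^{T}(\det(A_V)/2^t) = 0$.

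Finally, for the integral lift, let $\tilde x \in V_{S'}^{T}(K)$ be any lift of $\det(A_V)/2^t$. Then $f_{S_\fp}^{T}(\tilde x) \in B_{S'}(K)$ reduces to zero in $B_{S'}(K)_-$, so it lies in $(1+c)\,B_{S'}(K)$. Multiplying by $(1-c)$ and using the identity $(1-c)(1+c) = 0$ yields
\[ f_{S_\fp}^{T}(u) \;=\; (1-c)\,f_{S_\fp}^{T}(\tilde x) \;=\; 0, \]
which is the desired conclusion. The only real subtlety is the bookkeeping: verifying that $f_{S_\fp}^{T}$ indeed lands in $B_{S'}(K)$ (the Ritter--Weiss module attached to $\Sigma = S$, not to $\Sigma = S_\fp$), so that the Cramer expansions for the two maps can be subtracted in a common codomain. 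Once that point is in place, the cancellation and the final lift via $(1-c)(1+c) = 0$ are automatic.
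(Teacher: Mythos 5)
Your proposal is correct and amounts to essentially the same argument as the paper: both boil the claim down to a Laplace/Cramer-type expansion of $\det(A_V)$ together with the two observations that the $\fp$-component of $f_{S_\fp}^T$ lands in $\Delta G_\fp=0$ and that the other cofactor sums vanish because of repeated columns. The only cosmetic difference is that you first establish the Cramer identity $f_{S,-}^T(\det(A_V))=\Theta_{S,T}\,e_\fp$ and then subtract the $\fp$-column correction $A_{w,\fp}e_\fp$, whereas the paper evaluates $f_{S_\fp}^T(u)$ component by component directly; your separate check that $f_{S_\fp}^T$ indeed maps into the same module $B_{S'}(K)=\ker\theta_B^{S}$ (using $1-\sigma_\fp^{-1}=0$) is a legitimate point the paper leaves implicit.
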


\begin{proof}  For any place $v \in S' \setminus \{v_0\}$, the component of $f_{S_\fp}^T(u)$ at $v$ is $(1-c)/2^t$ times the determinant of the matrix $A_v$ obtained from $A_V$ by replacing the column at $\fp$ with its image under the component of $f_{S_\fp}^T$ at $v$.  If $v \neq \fp$, this is precisely the column of $A_V$ at $v$.  Hence the matrix $A_v$ has two identical columns, namely the columns indexed by $\fp$ and $v$.  It follows that $\det(A_v) =0$.  
If $v = \fp$, the component of $f_{S_\fp}^T(u)$ at $\fp$ lies in $\Delta G_\fp$, which vanishes since $\fp$ splits completely in $K$.  Therefore $A_\fp$ has a column of 0's and hence $\det(A_\fp) =0$.
This proves the lemma.
\end{proof}

Now we can view $u$ as an element of $\cO_{K, S_\fp, T}^*$, since this is the kernel of $f_{S_\fp}^T$ (see \cite{dk}*{Eqn. (146)}). 

\begin{lemma}  We have $|u|_w = 1$ for all places $w \nmid \fp$, finite or infinite.
\end{lemma}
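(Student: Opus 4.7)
The plan is to prove $|u|_w = 1$ by case analysis on the place $v$ of $F$ below $w$, using the previous lemma that identifies $u$ with an element of $\cO_{K, S_\fp, T}^*$ via \cite{dk}*{Eqn.~(146)}.

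Several cases are immediate from the local structure of $\tilde V_{S'}^T(K)$.  For $v \notin S_\fp \cup T$, the local factor is $\cO_w^*$, so automatically $|u|_w = 1$.  For $v \in T$, the factor is $U_w^1 \subset \cO_w^*$, again giving $|u|_w = 1$.  For an auxiliary unramified $v \in S' \setminus S_\fp$, the component of $f_{S_\fp}^T$ is the composition $V_w \to W_w \cong \Z[G_w]$ (isomorphism since $v$ is unramified), and the kernel condition places the $v$-component of $u$ in $\ker(V_w \to W_w) = \cO_w^* \subset V_w$.

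For $v$ archimedean:  Since $K$ is CM and $F$ is totally real, $v$ is complex in $K$ with decomposition group $G_w = \langle c\rangle$, so $cw = w$.  The $(1-c)$ factor in the definition $u = (1-c)\det(A_V)/2^t$ forces $c(u) = u^{-1}$ after identifying $u$ in $K^*$, and combining with $cw = w$ yields $|u|_w = |c(u)|_w = |u|_w^{-1}$, hence $|u|_w = 1$.

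The main case is $v \in S_{\ram}(K/F)$ finite.  Here the component of $f_{S_\fp}^T$ at $v$ is $V_w \to \Delta G_w$, and the kernel condition only places the $v$-component of $u$ in $K_w^* \subset V_w$ (via the short exact sequence~(\ref{e:vseq})), without immediately constraining $v_w(u)$.  The strategy is to run a Leibniz argument parallel to that of the preceding lemma but using the finer projection $V_w \to W_w$ (whose kernel is $\cO_w^*$ rather than $K_w^*$): one shows that the image of $\det(A_V)$ at the $v$-component under $V_w \to W_w$ is determined, via the defining relation $\bar r = (1-\sigma_w^{-1})s$ of $W_w$ in~(\ref{e:rww}), by the $\Delta G_w$-image from the $v$-column of $A$.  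The matching of this ``first coordinate'' with the actual $v$-column of $A_V$ produces a duplicated-column situation, forcing the image of $u_v$ in $W_w$ to lie in the kernel of $(1 - \sigma_w^{-1})$ on $\Z[G_w/I_w]$.  One then combines this with (i) the minus-part symmetry $v_{cw}(u) = -v_w(u)$ coming from the $(1-c)$ factor, and (ii) the divisibility of $\Theta_{S,T}(K/F) = \det(A)$ by the idempotent $\prod_{v \in S_{\ram}}(1 - \N G_{K,v}/\#G_{K,v})$, which imposes anti-invariance of $u$ under the decomposition group $G_{K,v}$; this suffices to force $v_w(u) = 0$ for every $w \mid v$.

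The main obstacle is precisely the finite ramified case, and in particular the subcase where $c \notin G_{K,v}$ (which, as the Galois structure of abelian CM extensions shows, can indeed occur when $v$ is ramified in $K^+/F$ but unramified in $K/K^+$).  In this situation the minus-part argument alone does not force the individual valuations $v_w(u)$ and $v_{cw}(u)$ to vanish, so one must genuinely exploit how the idempotent support of $\Theta_{S,T}$ at $G_{K,v}$ propagates through the determinantal construction together with the $(1-c)$ factor and the local Ritter--Weiss structure.
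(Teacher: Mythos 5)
Your overall strategy --- projecting to $W_w$ via the sequence (\ref{e:wseq}) and running a Leibniz argument --- is the same as the paper's, and the archimedean case (using $c(u)=u^{-1}$) is also as in the paper.  The cases $v\notin S_\fp\cup T$, $v\in T$, and $v\in S'\setminus S_\fp$ unramified need no special argument: once $u$ is identified with an element of $\cO_{K,S_\fp,T}^*$, the condition $|u|_w=1$ for $w$ above such $v$ is automatic; the only nontrivial content of the lemma is for $v\in S$.

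The finite ramified case has a genuine gap, and you have correctly located it.  Your argument projects to the first ($\Delta G_w$) coordinate to get ``duplicated columns,'' concludes that this coordinate of the image of $u$ in $W_w$ vanishes, and then uses $\bar r=(1-\sigma_w^{-1})s$ to infer only that the second coordinate lies in $\ker(1-\sigma_w^{-1})$ on $\Z[G_w/I_w]$.  That kernel is nonzero (generated by the norm of $\bar\sigma_w$), so this is strictly weaker than $u\in\cO_w^*$.  Your proposed remedy via the $(1-c)$ symmetry together with the idempotent support of $\Theta_{S,T}$ does not close the gap, for the reasons you already suspect: the support of $\det(A)$ at the idempotent does not descend to the individual columns of $A_V$, and when $c\notin G_{K,v}$ the minus-symmetry only relates $|u|_w$ and $|u|_{cw}$ to each other without forcing either to be $1$.

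The step you are missing is to compute the $2\times 2$ minors of $A_{W,v}$ from the columns indexed by $\fp$ and $v$ \emph{inside} $W_w$, rather than projecting coordinate-by-coordinate.  Write the entry of the $\fp$-column in row $i$ as $(r_i,s_i)\in W_w$; the corresponding entry of the $v$-column is $r_i\in\Delta G_w$, because $V_w\to W_w\to\Delta G_w$ agrees with $V_w\to\Delta G_w$.  Then
\[ r_j\cdot(r_i,s_i)-r_i\cdot(r_j,s_j)=\bigl(0,\ \bar r_j s_i-\bar r_i s_j\bigr), \]
and substituting $\bar r_i=(1-\sigma_w^{-1})s_i$, $\bar r_j=(1-\sigma_w^{-1})s_j$ and using commutativity of $\Z[G_w/I_w]$ shows the second coordinate also vanishes.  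Every such $2\times 2$ minor is therefore zero, hence $\det(A_{W,v})=0$, hence the image of $u$ in $\Ind_{G_w}^G W_w$ is exactly zero and $u\in\cO_w^*$.  No input from the minus structure or the idempotent support is required in this case.
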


\begin{proof}  For $w \in S_K$ finite, this is the argument of \cite{igs}*{Lemma 4.8}, which we briefly sketch.  Let $v \in S$ denote the place of $F$ below $w$.
In view of the exact sequence (\ref{e:wseq}), we need to show that the image of $u$ in $\Ind_{G_w}^{G}  W_w(K_w)$ vanishes.  This image is evaluated by taking $(1-c)/2^t$ times the determinant of the matrix $A_{W,v}$ obtained from $A_V$ by replacing the column indexed by $\fp$ by its image in
 $\Ind_{G_w}^{G} W_w(K_w)$.  But using the definition of $W_w(K_w)$, one can show that the $2 \times 2$ minors arising from the columns of $A_{W,v}$ indexed by 
$\fp$ and $v$ vanish.  It follows that $\det(A_{W,v}) =0$.

For $w$ complex, the result is immediate; by definition, $u$ satisfies $c(u) = u^{-1}$.
\end{proof}

 Recall that defining $V_{S'}^T(K)$ involved choosing a place of $K$ above each place of $F$.  Let $\fP$ denote the place above $\fp$ chosen in this definition. 
Define \[ \ord_{G} \colon K^* \lra \Z[G_K], \qquad x \mapsto \sum_{\sigma \in G} \ord_\fP(\sigma(x)) \sigma^{-1}.  \]
The following is the Brumer--Stark conjecture, with the usual appearance of ${\Theta_{S,T}}$ improved to ${\Theta_{S,T}}/{2^{t-1}}$.
\begin{theorem} We have 
\[ \ord_G(u) = \frac{\Theta_{S,T}(K/F)}{2^{t-1}}. \]
\end{theorem}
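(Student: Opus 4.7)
My plan is to exhibit both $\ord_G(u)$ and $\Theta_{S,T}(K/F)/2^{t-1}$ as the image of $u$ under a single $\Z[G_K]$-linear map, and to match them via Laplace expansion.

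Define $\pi_\fp\colon V_{S'}^T(K) \to \Z[G_K]$ as the $\fp$-component of $f_S^T$. Since $\fp \in S' \setminus S$ and $\fp$ splits completely in $K$, we have $G_\fp = 1$, so $V_\fp = K_\fP^*$ and $W_\fp = \Z$ with $V_\fp \to W_\fp$ equal to $\ord_\fP$. By definition of $A$, the induced map $\pi_\fp$ carries each basis vector $e_i$ of $V_{S'}^T(K)_-$ to the matrix entry $A_{i,\fp}$. Expanding $\det(A_V)$ by Laplace along its $\fp$-column (whose entries are the $e_i$) gives
\[
\det(A_V) = \sum_i C_{i,\fp}\, e_i \text{ in } V_{S'}^T(K)_-,
\]
where $C_{i,\fp}$ is the $(i,\fp)$-cofactor of $A$. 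Applying $\pi_\fp$ and using the ordinary cofactor expansion of $\det(A) = \Theta_{S,T}(K/F)$ along its $\fp$-column yields
\[
\pi_\fp(\det(A_V)) = \sum_i A_{i,\fp}\, C_{i,\fp} = \det(A) = \Theta_{S,T}(K/F) \text{ in } \Z[G_K]_-.
\]
Combined with the identity $(1-c)\Theta_{S,T} = 2\Theta_{S,T}$ (a consequence of $(1+c)\Theta_{S,T} = 0$), $\Z[G_K]$-linearity of $\pi_\fp$ gives
\[
\pi_\fp(u) = \pi_\fp\!\left(\frac{(1-c)\det(A_V)}{2^t}\right) = \frac{\Theta_{S,T}(K/F)}{2^{t-1}}.
\]

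The remaining step is to identify $\pi_\fp(u)$ with $\ord_G(u)$ when $u$ is viewed as an element of $\cO_{K,S_\fp,T}^* \subset K^*$ via $\ker(f_{S_\fp}^T)$. When $\fp$ splits completely, the diagonal embedding $K^* \hookrightarrow \Ind_{G_\fp}^{G_K} V_\fp \cong \bigoplus_{\sigma \in G_K} K_{\sigma\fP}^*$ sends $u$ to $(u)_\sigma$; composing with $\ord_\fP$ on each factor (using the identification $\Ind_{G_\fp}^{G_K} W_\fp = \Z[G_K]$) produces $\sum_\sigma \ord_\fP(\sigma u)\, \sigma^{-1} = \ord_G(u)$. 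Both $\ord_G(u)$ and $\Theta_{S,T}(K/F)/2^{t-1}$ are annihilated by $1+c$ (the former because $c(u) = u^{-1}$), and the subgroup $\{x \in \Z[G_K] : (1+c)x = 0\}$ embeds injectively into $\Z[G_K]_-$, so the equality in $\Z[G_K]_-$ lifts uniquely to $\Z[G_K]$, finishing the proof.

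The main obstacle is this final identification: carefully matching the abstractly-defined $\pi_\fp$ (via induction of local modules) with the classical $\ord_G\colon K^* \to \Z[G_K]$. The Laplace expansions and the $(1-c)/2$ manipulation in the minus part are essentially formal once this dictionary is in place.
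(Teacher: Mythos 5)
Your proposal is correct and takes essentially the same route as the paper: it applies the $\fp$-component $\pi_\fp = f_\fp$ of $f_S^T$ to $u$, computes $\pi_\fp(\det(A_V)) = \det(A)$ by cofactor expansion along the $\fp$-column, and identifies the composition $K^* \to \Ind_{G_\fP}^G V_\fP \xrightarrow{f_\fp} \Z[G_K]$ with $\ord_G$ using that $\fp$ splits completely (so $V_\fP = K_\fP^*$, $W_\fP = \Z$). The only difference is that your final lifting-from-$\Z[G_K]_-$-to-$\Z[G_K]$ paragraph is superfluous (the displayed equality $\pi_\fp(u) = \Theta_{S,T}/2^{t-1}$ already holds in $\Z[G_K]$ once one traces the $(1-c)/2^t$ manipulation through $\pi_\fp$), but it is not incorrect.
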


\begin{proof}
The proof of \cite{igs}*{Lemma 4.9} applies essentially without change. We recall the argument.
Consider the map \[ f_\fp\colon \Ind_{G_\fP}^G V_\fP(K_\fP) \lra \Z[G_K] \]
induced by \[ V_\fP(K_\fP) \lra W_\fP(K_\fP) \cong \Z[G_\fP] = \Z. \]  
Note that if we start with 
$A_V$ and replace the column indexed by $\fp$ with its image under $f_\fp$,
then we obtain precisely the matrix $A$, since the component of $f_S^T$ at $\fp$ is by definition $f_\fp$.

Now, the composition of $f_\fp$ with  \[ K^* \lra \Ind_{G_\fP}^G V_\fP(K_\fP) \] is precisely the map $\ord_G$.
It follows that 
\[ \ord_G(u) = \frac{(1-c)}{2^{t}}\det(A) = \frac{\Theta_{S,T}(K/F)}{2^{t-1}}. \]
\end{proof}

The other consequences of Theorem~\ref{t:etnc} stated in \S\ref{s:icon}, with $\Theta_{S,T}$ improved to ${\Theta_{S,T}}/{2^{t-1}}$, follow similarly by mildly adapting the arguments of \cite{bks}.

\subsection{Connection to LTC} \label{s:ltc}

We briefly recall the Leading Term Conjecture (LTC) of Burns, Kurihara, and Sano \cite{bks}.  
Let the notation be as in the introduction.
The map $f_{S}^{T}(K)$ in (\ref{e:fdef}) yields an exact sequence
\begin{equation} \label{e:ovbn}
 \begin{tikzcd}
 0 \ar[r] & \cO_{K, S, T}^* \ar[r] & V_{S'}^T(K) \ar[r,"f_S^T(K)"] & B_{S'}(K) \ar[r] & \nabla_S^T(K) \ar[r] & 0.  \end{tikzcd} 
 \end{equation}
Furthermore the Ritter--Weiss module $\nabla_S^T(K)$ sits in a short exact sequence
\[ \begin{tikzcd}
 0 \ar[r] & \Cl_S^T(K) \ar[r] & \nabla_S^T(K) \ar[r] & X_{K,S} \ar[r] & 0.  \end{tikzcd} \]
 If we denote by a subscript $\R$ the tensor product with $\R$, then in particular we obtain an isomorphism $\nabla_S^T(K)_\R \cong (X_{K,S})_\R$.
 
The $\Z[G_K]$-modules $V_{S'}^T(K)$ and $B_{S'}(K)$ are free of rank $\#S' - 1$.   The sequence (\ref{e:ovbn}) yields a sequence of maps
\[ \begin{tikzcd}[
    ,row sep = 0ex
    ,/tikz/column 1/.append style={anchor=base east}
    ,/tikz/column 2/.append style={anchor=base west}
    ]
\det\nolimits_{\Z[G]} V_{S'}^T(K) \otimes (\det\nolimits_{\Z[G]} B_{S'}(K))^{-1} \ar[r] &   \det\nolimits_{\R[G]} V_{S'}^T(K)_\R \otimes (\det\nolimits_{\R[G]} B_{S'}(K)_\R)^{-1} \\
 \ar[r,"\sim"] &  \det\nolimits_{\R[G]} (\cO_{K, S, T})_\R \otimes (\det\nolimits_{\R[G]} \nabla_{S}^T(K)_\R)^{-1} \\
 \ar[r,"\sim"] &  \det\nolimits_{\R[G]} (\cO_{K, S, T})_\R \otimes (\det\nolimits_{\R[G]} (X_{K,S})_\R)^{-1} \\
 \ar[r,"\sim"] &  \R[G]
\end{tikzcd}
\]
whose composition we denote by $\iota_{K,S,T}$.  The last isomorphism above employs the regulator map
\[ \begin{tikzcd}[
    ,row sep = 0ex
    ,/tikz/column 1/.append style={anchor=base east}
    ,/tikz/column 2/.append style={anchor=base west}
    ] \lambda\colon (\cO_{K, S, T})_\R \ar[r,"\sim"] &  (X_{K,S})_\R, \\
u \arrow[mapsto]{r} &  \sum_{v \in S} (\log |u|_v)v. 
\end{tikzcd}
\]
The Leading Term Conjecture states that the image of $\iota_{K,S,T}$ is the $\Z[G]$-submodule of $\R[G]$ generated by $\Theta^*_{S,T}(K/F)$.  Here $\Theta^*_{S,T}(K/F)$ is the element of $\R[G]$ giving the leading terms of the $L$-functions of each character at $s=0$, i.e.\ such that
\[ \chi(\Theta^*_{S, T}(K/F)) = L_{S,T}^*(\chi^{-1}, K/F, 0) = \lim_{s \ra 0} L_{S,T}(\chi^{-1}, K/F,s)/s^{r_\chi} \neq 0 \]
for the appropriate integer $r_\chi$.
Suppose that $\chi$ is an odd character, and write 
\[ S_\chi = \{v \in S \colon \chi(v) \neq 1\}. \]
Then \begin{equation} \label{e:logv}
  L_{S,T}^*(\chi^{-1}, K/F, 0) = L_{S_\chi, T}(\chi^{-1}, K/F, 0) \prod_{v \in S \setminus S_\chi} (\log \N v). \end{equation}
Moreover, if $E$ denotes the subfield of $K$ fixed by the kernel of $\chi$, then
\begin{equation}
\label{e:ocv}
  L_{S_\chi, T}(\chi^{-1}, K/F, 0) = L_{S(E), T}(\chi^{-1}, E/F,0) \prod_{v \in S_\chi \setminus S(E)} (1 - \chi^{-1}(v)). 
  \end{equation}

In \cite{kurihara}*{Proposition 3.5}, Kurihara proves that the minus part of the LTC implies Theorem~\ref{t:etnc}.  As he states there, the converse is also true by the same reasoning, and we briefly sketch some details.

Therefore, suppose we have chosen bases for  $V_{S'}^T(K)_-$ and $B_{S'}(K)_-$ 
such that
\[ \det(f_{E,-}) = \Theta_{S(E),T}(E/F) \text{ in } \Z[G_E]_- \]
for all CM fields $E$ with $F \subset E \subset K$.  Let $v \in \det\nolimits_{\Z[G]_-} V_{S'}^T(K)_-$ and $b \in (\det\nolimits_{\Z[G]_-} B_{S'}(K)_-)^{-1}$ denote the elements associated to these bases.  We need to prove that $\iota_{K,S, T}(v \otimes b) = \Theta^*_{S,T}(K/F)$.  We can do this character by character, restricting to odd characters because we are working on the minus side.  In the proof of \cite{kurihara}*{Proposition 3.5}, Kurihara shows that
 \begin{equation} \label{e:kv}
 \chi(\iota_{K,S, T}(v \otimes b)) = \chi(\iota_{E, S(E), T}(\overline{v} \otimes \overline{b})) \cdot \prod_{v \in S \setminus S_\chi} (\log \N v) \prod_{v \in S_\chi \setminus S(E)} (1 - \chi^{-1}(v)), \end{equation}
where $\overline{v}$ and $\overline{b}$ denote the elements of the determinant modules associated to the induced bases of $V_{S'}^T(E)_-$ and $B_{S'}(E)_-$.
Since the $\chi$-component of the map $f_{S(E)}^T(E)$ is an isomorphism, it is clear that  $\chi(\iota_{E, S(E), T}(\overline{v} \otimes \overline{b}))$ is the evaluation of $\chi$  at the determinant of $f_{S(E)}^T(E)$ with respect to our bases; by construction, this is \[ \chi(\Theta_{S(E), T}(E/F)) = L_{S(E),T}(\chi^{-1}, E/F, 0). \] Therefore, combining (\ref{e:logv}), (\ref{e:ocv}), and (\ref{e:kv}), we obtain the desired equality
\[ \chi(\iota_{K,S, T}(v \otimes b)) = L_{S,T}^*(\chi^{-1}, K/F, 0).
\]

   \end{document}